\theoremstyle{plain}
\newtheorem{theorem}{Theorem}[section]
\newtheorem{corollary}[theorem]{Corollary}
\newtheorem{lemma}[theorem]{Lemma}
\newtheorem{proposition}[theorem]{Proposition}
\newtheorem{problem}[theorem]{Problem}
\newtheorem{remark}[theorem]{Remark}
\newcommand{\R}{\mathbb{R}}
\newcommand{\Exp}{\operatornamewithlimits{Exp}}
\newcommand{\E}{\mathbb{E}}
\newcommand{\p}{\mathbb{P}}
\newcommand{\argmax}{\operatornamewithlimits{argmax}}
\begin{document}
\title{Expected Supremum Representation of a Class of Single Boundary Stopping Problems}

\author{Luis H. R. Alvarez E.\thanks{luis.alvarez@tse.fi}\quad Pekka Matomäki \thanks{pjsila@utu.fi}\\
Department of Accounting and Finance\\ Turku School of Economics\\
FIN-20014 University of Turku\\ Finland}
\maketitle

\abstract{We consider the representation of the value of a class of optimal stopping problems of linear diffusions in a linearized form as an expected supremum of a known function.
We establish an explicit integral representation of this representing function by utilizing the explicitly known marginals of the joint probability distribution of the extremal processes.
We also delineate circumstances under which the value of a stopping problem induces directly this representation and show how it is connected with the monotonicity of the generator.
We compare our findings with existing literature and show, for example, how our representation is linked to the smooth fit principle and how it coincides
with the optimal stopping signal representation.
The intricacies of the developed integral representation are explicitly illustrated in various examples arising in financial applications of optimal stopping.}\\

\noindent{\bf AMS Subject Classification:}  60G40, 60J60, 91G80\\

\noindent{\bf Keywords:} linear diffusions, optimal stopping, supremum representation for excessive functions

\thispagestyle{empty} \clearpage \setcounter{page}{1}

\section{Introduction}

It is well-known from the literature on stochastic processes that the probability distributions of first hitting times are closely related to the probability
distributions of the running supremum and running infimum of the underlying diffusion. Consequently, the question of whether a linear diffusion has exited
from an open interval prior to a given date or not can be answered by studying the behavior of the extremal processes up to the date in question. If the extremal
processes have remained in the open interval up to the particular date, then the process has not yet hit the boundaries and vice versa. In
this study we utilize this connection and develop a linearized representation of the value function of an optimal stopping problem as the expected supremum of a representing function
with known properties in the spirit of the pioneering work by \cite{FoKn1,FoKn2} and its subsequent extension to the treatment of optimal stopping problems by \cite{ChSaTa}.
More formally, we plan to determine {\em explicitly} the nondecreasing, nonnegative, and upper semicontinuous representing function $f$ for which
\begin{align}
V(x)= \E_x\left[\sup\{f\left(X_t\right);t\leq T\}\right],\label{BankFollmerrep}
\end{align}
where $V(x)$ denotes the value of the considered class of optimal stopping problems and $T\sim \text{Exp}(r)$ is an exponentially distributed random time independent of
the underlying process $X$.

The relatively recent literature on stochastic control theory indicates that the connection between, among others, the value functions and extremal processes in optimal stopping and
singular stochastic control problems goes far beyond the standard connection between first hitting times and the running supremum
and infimum of the underlying process (see, for example, \cite{BaBa,BaElKa,BaFo,BaRi,ChFe14,DeFeMo15,ElKaMe,ElKaFo,Ferrari15,FeSa14}). Essentially, in these studies the determination of the
optimal policy and its value is shown to be equivalent with the existence of an appropriate optional projection involving the running supremum of a progressively
measurable process (known as the {\em Bank - El Karoui representation}). The advantage of the representation utilized in these studies is that it is very general and applies
also outside the standard Markovian and infinite horizon setting.
Moreover, it can be utilized for studying and solving other stochastic control problems as well. For example, as was shown in \cite{BaElKa,BaFo},
the approach is applicable in the analysis of the Gittins-index familiar from the literature on multi-armed bandits (cf. \cite{ElKarKar94,Git79,GitGla77,GitJon79,Kar1984}).

Instead of establishing directly how the value of the considered class of optimal stopping problems can be expressed as an expected supremum, we take an alternative route and compute
first explicitly the expected value of the supremum of an unknown function satisfying a set of
monotonicity and regularity conditions by utilizing the known probability distribution of the running supremum of the underlying. Setting this expected value equal with the value of the optimal stopping problem then results into a functional identity
from which the unknown function can be explicitly determined. In the considered single boundary setting the function admits a relatively simple characterization in terms of
the increasing minimal excessive mapping for the underlying diffusion (cf. \cite{BaBa}). We find that the required monotonicity of the function needed for the representation is closely
related with the monotonicity
of the generator on the state space of the underlying process. However, since only the sign of the generator typically affects the determination of the optimal strategy
and its value, our results
demonstrate that not all single boundary problems can be represented as the expected supremum of a monotonic function. We also investigate the regularity properties of the
function needed for the representation and show that
it needs not be continuous at the optimal stopping boundary. More precisely, we find that if the optimal boundary is attained at a point where the exercise payoff is not differentiable and, hence, the standard smooth fit condition is not satisfied, then the
representing function is only upper semicontinuous at the optimal boundary. This is a result which is in line with the findings by \cite{ChSaTa}.

The contents of this study is as follows. In section two we formulate the considered problem, characterize the underlying stochastic dynamics, and state a set of
auxiliary results needed in the subsequent analysis of the problem. Section three focuses on a single boundary setting where the optimal rule is to exercise as soon
as a given exercise threshold is exceeded.  Our general findings on the representing function are explicitly illustrated  in section four in various settings including incentive compatible stopping rules, Gittins indices, optimal entry, and stopping of spectrally negative jump diffusions. Finally, section five concludes our study.

\section{Problem Formulation}

\subsection{Underlying stochastic dynamics}
We consider a linear, time homogeneous and regular diffusion process $X=\{X(t);t\in[0,\xi)\}$, where $\xi$ denotes the possible infinite life time of the
diffusion. We assume that the diffusion is defined on a complete filtered probability space $(\Omega,\p,\{\mathcal{F}_t\}_{t\geq0},\mathcal{F})$, and that the state
space of the diffusion is $\mathcal{I}=(a,b)\subset\R$. Moreover, we assume that the diffusion does not die inside $\mathcal{I}$, implying that the boundaries $a$
and $b$ are either natural, entrance, exit or regular (see Section II. 1 in \cite{BorSal15} for a characterization of the boundary behaviour of diffusions). If a boundary
is regular, we assume that it is killing and that the process $X$ is immediately sent to a cemetery state $\partial\not\in \mathcal{I}$ as soon at it hits that boundary. Furthermore we will denote by $M_t=\sup\{X_s;s\in[0,t]\}$ the running supremum process of the considered diffusion $X_t$.

As usually, we denote by $\mathcal{A}$ the differential operator representing the infinitesimal generator of $X$. For a given smooth mapping $f:\mathcal{I}\mapsto\R$ this operator is given by
\begin{align*}
(\mathcal{A}f)(x)=\frac{1}{2}\sigma^2(x)\frac{d^2}{dx^2}f(x)+\mu(x)\frac{d}{dx}f(x),
\end{align*}
where the drift coefficient $\mu:\mathcal{I}\mapsto\R$ and the volatility coefficient $\sigma:\mathcal{I}\mapsto\R_+$ are given continuous mappings. In order to avoid interior singulairties, we assume throughout this study that $\sigma(x)>0$ for all $x\in \mathcal{I}$.  As is known from the classical
theory on linear diffusions,
there are two linearly independent {\it fundamental solutions}
$\psi(x)$ and $\varphi(x)$ satisfying a set of appropriate boundary
conditions based on the boundary behavior of the process $X$ and
spanning the set of solutions of the ordinary differential equation
$(\mathcal{G}_ru)(x)=0$, where $\mathcal{G}_r=\mathcal{A}-r$ denotes the differential operator associated with the diffusion $X$ killed at the constant rate $r$.
Moreover, $\psi'(x)\varphi(x) - \varphi'(x)\psi(x) = BS'(x),$
where $B>0$ denotes the constant Wronskian of the fundamental
solutions and
$$
S'(x)=\exp\left(-\int^x\frac{2\mu(t)}{\sigma^2(t)}dt\right)
$$
denotes the density of the scale function of $X$ (for a comprehensive characterization of the fundamental solutions, see \cite{BorSal15}, pp. 18--19). The
functions $\psi$ and $\varphi$ are minimal in the sense that any non-trivial $r$-excessive mapping for $X$ can be expressed as a combination of these two
(cf. \cite{BorSal15}, pp. 32--35). Given the fundamental solutions, let $u(x)=c_1\psi(x) + c_2\varphi(x), c_1,c_2\in \mathbb{R}$ be an arbitrary twice continuously
differentiable $r$-harmonic function and define for sufficiently smooth mappings $g:\mathcal{I}\mapsto\R$ the functional
\begin{align*}
(L_u g)(x) = g(x) \frac{u'(x)}{S'(x)}-\frac{g'(x)}{S'(x)}u(x) = c_1(L_\psi g)(x)+c_2(L_\varphi g)(x)
\end{align*}
associated with the representing measure for $r$-excessive functions (cf. \cite{Salminen1985}). Noticing that if $g$ is twice continuously differentiable, then
\begin{align}
(L_u g)'(x)=-(\mathcal{G}_rg)(x)u(x)m'(x)\label{linearityGenerator}
\end{align}
where
$
m'(x) = 2/(\sigma^{2}(x)S'(x))
$
denotes the density of the speed measure $m$ of $X$.
Hence, we find that
\begin{align}
(L_u g)(y)-(L_u g)(z)=\int_y^z(\mathcal{G}_rg)(v)u(v)m'(v)dv\label{canonical}
\end{align}
for any $a<y<z<b$.

Finally, we denote by $\mathcal{L}_r^1(\mathcal{I})$ the class of measurable functions $f:\mathcal{I}\mapsto\R_+$ satisfying the integrability condition
$$
\mathbb{E}_x\int_0^\infty e^{-rs}|f(X_s)|ds<\infty
$$
for all $x\in \mathcal{I}$. As is known from the literature on linear diffusions, if $f\in \mathcal{L}_r^1(\mathcal{I})$ then its expected cumulative present value
$$
(R_rf)(x)=\mathbb{E}_x\int_0^\infty e^{-rs}f(X_s)ds
$$
can be expressed as (cf. \cite{BorSal15}, p. 29)
\begin{align}
(R_rf)(x)= \int_a^bG_r(x,v)f(v)m'(v)dv,\label{Green}
\end{align}
where
\begin{align}
G_r(x,v)=\begin{cases}
B^{-1}\varphi(v)\psi(x),&x\leq v,\\
B^{-1}\varphi(x)\psi(v),&x\geq v.
\end{cases}
\end{align}

\subsection{The Optimal Stopping Problem and Auxiliary Results}

In this paper our objective is to examine the optimal stopping problem
\begin{align}\label{eq prob}
V(x)=\sup_{\tau}\E_x\left[e^{-r\tau}g(X_\tau)\right]
\end{align}
for exercise payoff functions $g$ satisfying a set of sufficient regularity conditions and establish a representation of the value $V$ as
the expected supremum of an appropriately chosen representing function along
the lines of the pioneering studies \cite{BaElKa}, \cite{BaFo}, \cite{ChSaTa}, \cite{ElKaFo}, \cite{ElKaMe}, \cite{FoKn1}, \cite{FoKn2}. Our main result is
based on the following representation theorem originally established in \cite{ChSaTa}.

\begin{theorem}(\cite{ChSaTa}, Theorem 2.5)\label{theorem abo1}
Let $X_t$ be a Hunt process on $\mathcal{I}$ and $T\sim\Exp (r)\perp X_t$. Assume that the exercise payoff $g$ is non-negative, lower semicontinuous, and satisfies the condition
$\E_x\left[\sup_{t\geq 0}e^{-rt}g(X_t)\right]<\infty$ for all $x\in \mathcal{I}$.
Assume also that there exists an upper semicontinuous $\hat{f}$ and a point $y^{\ast}\in \mathcal{I}$ such that
\begin{enumerate}
\item[(a)] $\hat{f}(x)\leq0$ for $x< y^{\ast}$, $\hat{f}(x)$ is non-decreasing and positive for $x\geq y^{\ast}$,

\item[(b)] $\E_x\left[\sup_{0\leq t\leq T}\hat{f}(X_t)\right]=g(x)$ for $x\geq y^{\ast}$, and
\item[(c)] $\E_x\left[\sup_{0\leq t\leq T}\hat{f}(X_t)\right]\geq g(x)$ for $x\leq y^{\ast}$.
\end{enumerate}
Then
\begin{align}\label{aboarvo1}
V(x)=\E_x\left[\sup_{0\leq t\leq T}\hat{f}(X_t)\mathbbm{1}_{[y^{\ast},b)}(X_t)\right]=\E_x\left[\hat{f}(M_T)\mathbbm{1}_{[y^{\ast},b)}(M_T)\right]
\end{align}
and  $\tau^*=\inf\{t\geq0: X_t>y^{\ast}\}$ is an optimal stopping time.
\end{theorem}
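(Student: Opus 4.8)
The plan is to put $h:=\hat f\,\mathbbm{1}_{[y^*,b)}$ and $W(x):=\E_x\!\left[\sup_{0\le t\le T}h(X_t)\right]$, and to establish the two inequalities $V\le W$ and $V\ge W$ separately; the representation in terms of $M_T$ will drop out along the way, and so will the optimality of $\tau^*$. Throughout, the only probabilistic input is the memorylessness of the killing time: conditionally on $\{t\le T\}$ and on $\mathcal F_t$ the residual time $T-t$ is again $\mathrm{Exp}(r)$-distributed and independent of $\mathcal F_t$, and likewise at a stopping time $\tau$, where $\p(\tau\le T\mid\mathcal F_\tau)=e^{-r\tau}$.

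First I would check that $W$ is a non-negative, finite, $r$-excessive function. Finiteness follows from conditions (b), (c) and the integrability hypothesis on $g$ (after conditioning on the first entry into $[y^*,b)$), together with $0\le h\le\hat f^{+}$. For the supermartingale property one splits the supremum at a deterministic time $t$, discards the contribution of $\{T<t\}$, and conditions the remainder on $\mathcal F_t\cap\{t\le T\}$; the Markov property and memorylessness then give $W(x)\ge e^{-rt}\E_x[W(X_t)]$. The normalisation $e^{-rt}\E_x[W(X_t)]\to W(x)$ as $t\downarrow0$ uses right-continuity of the paths together with the monotonicity of $\hat f$, which guarantees that the initial value $h(X_0)$ is recovered in the limit. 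Next I would record that $g\le W$ everywhere, with equality on $[y^*,b)$: condition (a) gives $h\ge\hat f$ pointwise (it equals $\hat f$ on $[y^*,b)$ and is $\ge\hat f$ below $y^*$, where $\hat f\le0$), so $W\ge\E_x[\sup_{t\le T}\hat f(X_t)]\ge g$ by (b) for $x\ge y^*$ and by (c) for $x<y^*$; and when $x\ge y^*$ the positive number $\hat f(x)$ already appears at $t=0$, so the indicator deletes nothing relevant and $\sup_{t\le T}h(X_t)=\sup_{t\le T}\hat f(X_t)$ pathwise, whence $W(x)=g(x)$ by (b).

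Granting these two facts, $V\le W$ is immediate: $(e^{-rt}W(X_t))_{t\ge0}$ is a non-negative supermartingale, so optional sampling together with Fatou's lemma gives $\E_x[e^{-r\tau}W(X_\tau)]\le W(x)$ for every stopping time $\tau$, and $g\le W$ then yields $\E_x[e^{-r\tau}g(X_\tau)]\le W(x)$; taking the supremum over $\tau$ gives $V\le W$. For the reverse inequality I would first pass to the running-supremum form: since $\hat f$ is non-decreasing on $[y^*,b)$ and, for a diffusion, $M_T$ is attained over the compact interval $[0,T]$, one has $\sup_{0\le t\le T}h(X_t)=\hat f(M_T)\,\mathbbm{1}_{[y^*,b)}(M_T)$ pathwise, hence $W(x)=\E_x[\hat f(M_T)\,\mathbbm{1}_{[y^*,b)}(M_T)]$ (for a general Hunt process with jumps this is exactly where upper semicontinuity of $\hat f$ is needed). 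Then I would use $\{M_T\ge y^*\}=\{\tau^*\le T\}$ and the fact that on this event $M_T$ is the running supremum of $X$ over $[\tau^*,T]$; applying the strong Markov property at $\tau^*$ together with memorylessness, and noting that from $X_{\tau^*}\in[y^*,b)$ the running supremum stays $\ge y^*$, the identity $W(x)=\E_x[e^{-r\tau^*}W(X_{\tau^*})]=\E_x[e^{-r\tau^*}g(X_{\tau^*})]$ follows, the last step by the equality $W=g$ on $[y^*,b)$. Since the right-hand side is $\le V(x)$, this gives $V=W$ and exhibits $\tau^*$ as optimal.

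What is routine here is the supermartingale/optional-sampling bookkeeping and the finiteness estimate. The point I expect to require the most care is the rigorous interchange of the pathwise supremum over the \emph{random} interval $[0,T]$ with the Markov and memorylessness properties — both in the proof that $W$ is $r$-excessive and in the strong-Markov step at $\tau^*$ — and, for processes that can jump, the attainment of $M_T$, which is precisely the place where the upper semicontinuity hypothesis on $\hat f$ is invoked.
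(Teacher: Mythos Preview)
The paper does not prove this theorem: it is quoted as Theorem~2.5 of \cite{ChSaTa} and used as a black box throughout. There is therefore no proof in the paper to compare your argument against.

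That said, your reconstruction is correct and follows the natural route. The two halves are exactly the right ones: (i) $W(x):=\E_x[\sup_{t\le T}\hat f(X_t)\mathbbm{1}_{[y^*,b)}(X_t)]$ is an $r$-excessive majorant of $g$, so $V\le W$; (ii) a strong-Markov restart at $\tau^*$ together with the memorylessness of $T$ shows $W(x)=\E_x[e^{-r\tau^*}W(X_{\tau^*})]=\E_x[e^{-r\tau^*}g(X_{\tau^*})]\le V(x)$. Step (i) is precisely Proposition~2.1 of \cite{FoKn1}, which the present paper itself invokes in the proof of Theorem~\ref{theorem inc}; your memorylessness/splitting argument is the standard proof of that proposition. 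The technical points you flag --- handling the supremum over the random horizon $[0,T]$ in the excessivity and strong-Markov steps, and the identity $\sup_{t\le T}h(X_t)=h(M_T)$ for processes with jumps --- are exactly where \cite{ChSaTa} does the detailed work and where the upper-semicontinuity hypothesis on $\hat f$ enters. One minor caution: since $\tau^*=\inf\{t:X_t>y^*\}$ is defined with a strict inequality, the set identity $\{M_T\ge y^*\}=\{\tau^*\le T\}$ can fail on $\{M_T=y^*\}$; for a regular one-dimensional diffusion this event is null, but in the general Hunt setting a sentence is needed to dispose of it.
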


This theorem essentially states that if we can find a representing function $\hat{f}$ satisfying the required conditions (a)-(c), then the optimal stopping policy for \eqref{eq prob}
constitutes an one-sided threshold rule and its value can be expressed in a linearized form as an expected supremum attained at an independent exponential random time.
As we will prove later in this paper, the reverse argument is  also sometimes true: under certain circumstances the value of
the optimal policy generates a continuous and monotone function $\hat{f}$ for which the representation \eqref{aboarvo1} is valid. However, as we will
point out later in the case where the exercise reward can be expressed as an expected cumulative present value of a continuous flow, all single boundary stopping problems cannot be represented as proposed in Theorem \ref{theorem abo1}.

Before proceeding in our analysis and the explicit identification of the representing function, we first establish two auxiliary lemmata needed in the analysis of the problem. Our first findings based on the known joint probability distribution of the underlying and its running supremum are summarized in the following.
\begin{lemma}\label{jointprobabilitydist}
(A) If $h:\mathcal{I}\mapsto \mathbb{R}$ satisfies $h\in \mathcal{L}_r^1(\mathcal{I})$, then
\begin{align}\label{condexpb}
\frac{1}{r}\E_x[h(X_T)|M_T\leq y] &=\frac{(R_rh)(x)-(R_rh)(y)\frac{\psi(x)}{\psi(y)}}{1-\frac{\psi(x)}{\psi(y)}}
\end{align}
and
\begin{align}\label{condexp}
\frac{1}{r}\E_x[h(X_T)|M_T=y]=\frac{S'(y)}{\psi'(y)}\int_a^yh(v)\psi(v)m'(v)dv
\end{align}
for all $x\in(a,y]$. Especially, if $h\in C(\mathcal{I})\cap\mathcal{L}_r^1(\mathcal{I})$, then
\begin{align}\label{condexpc}
\begin{split}
\lim_{x\uparrow y}\frac{1}{r}\E_x[h(X_T)|M_T\leq y] &= (R_rh)(y)-(R_rh)'(y)\frac{\psi(y)}{\psi'(y)}\\
&=\frac{S'(y)}{\psi'(y)}\int_a^yh(v)\psi(v)m'(v)dv
\end{split}
\end{align}
for all $y\in \mathcal{I}$.\\
(B) If $\mathcal{G}_rh\in C(\mathcal{I}\backslash \mathcal{P})\cap\mathcal{L}_r^1(\mathcal{I})$, where $\mathcal{P}\in \mathcal{I}$ is a finite set of points, and
$\lim_{x\downarrow a}(L_\psi h)(x)=0$, then for all $x\in(a,y]$
\begin{align}\label{condexprep}
\frac{1}{r}\E_x[(\mathcal{G}_rh)(X_T)|M_T=y] &=-\frac{(L_\psi h)(y)}{(L_\psi \mathbbm{1})(y)},
\end{align}
where $\mathbbm{1}=\mathbbm{1}_{\mathcal{I}}(x)$.
\end{lemma}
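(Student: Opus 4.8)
The plan is to express every conditional expectation in terms of the resolvent $R_r$ and the classical first-passage identity $\E_x[e^{-rH_y}]=\psi(x)/\psi(y)$, valid for $a<x\le y<b$, where $H_y=\inf\{t\ge0:\,X_t\ge y\}$. Since $T\sim\Exp(r)$ is independent of $X$, Fubini together with $h\in\mathcal{L}_r^1(\mathcal{I})$ gives $\tfrac1r\E_x[h(X_T)]=(R_rh)(x)$. Because $X$ is regular the running supremum $M_T$ carries no atom at interior points, so $\{M_T\le y\}$ and $\{T<H_y\}$ differ by a $\p_x$-null set, whence $\p_x(M_T\le y)=\p_x(T<H_y)=1-\E_x[e^{-rH_y}]=1-\psi(x)/\psi(y)$, the denominator in \eqref{condexpb}. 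For the numerator I would condition on $\mathcal{F}_{H_y}$: on $\{H_y<T\}$ the process restarts afresh at $y$ and, by the lack of memory of the exponential clock, the residual lifetime $T-H_y$ is again $\Exp(r)$ and independent of $\mathcal{F}_{H_y}$; hence $\E_x[h(X_T);H_y<T]=\E_x[e^{-rH_y}]\,r(R_rh)(y)=r(R_rh)(y)\psi(x)/\psi(y)$. Subtracting this from $\E_x[h(X_T)]=r(R_rh)(x)$ yields $\E_x[h(X_T);M_T\le y]=r(R_rh)(x)-r(R_rh)(y)\psi(x)/\psi(y)$, and dividing by $\p_x(M_T\le y)$ gives \eqref{condexpb}.

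To obtain \eqref{condexp} the plan is to disintegrate along the law of $M_T$. Differentiating $\p_x(M_T\le y)$ shows that on $(x,b)$ the variable $M_T$ has density $\psi(x)\psi'(y)/\psi(y)^2$, and $y\mapsto\E_x[h(X_T);M_T\le y]$ is absolutely continuous there, so the conditional expectation is the quotient of the two derivatives. The algebraic heart of the computation is the identity $(L_\psi(R_rh))(y)=\int_a^y h(v)\psi(v)m'(v)\,dv$: since $\mathcal{G}_r(R_rh)=-h$, identity \eqref{canonical} applied to $R_rh$ gives $(L_\psi(R_rh))(y)-(L_\psi(R_rh))(y')=\int_{y'}^y h\psi m'$, while the boundary term $\lim_{y'\downarrow a}(L_\psi(R_rh))(y')=0$ can be read off the Green representation \eqref{Green}, which near $a$ produces $(L_\psi(R_rh))(y')=\int_a^{y'}h\psi m'\to0$ by the integrability of $h$. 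Using $(R_rh)(y)\psi'(y)-(R_rh)'(y)\psi(y)=S'(y)(L_\psi(R_rh))(y)$, differentiation of $\E_x[h(X_T);M_T\le y]$ produces $r\psi(x)\tfrac{S'(y)}{\psi(y)^2}\int_a^y h\psi m'$, and dividing by the density of $M_T$ leaves precisely $r\tfrac{S'(y)}{\psi'(y)}\int_a^y h\psi m'$, i.e. \eqref{condexp}. The first equality in \eqref{condexpc} then follows by L'Hôpital's rule in $x$ applied to \eqref{condexpb} (the regularity of $h$ makes $R_rh$ of class $C^1$), and the second equality is again the $L_\psi(R_rh)$ identity.

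Part (B) is then essentially a substitution. Applying \eqref{condexp} with $\mathcal{G}_rh$ in place of $h$ --- legitimate because $\mathcal{G}_rh\in\mathcal{L}_r^1(\mathcal{I})$ and the finite set $\mathcal{P}$ is Lebesgue-null, hence invisible to the integral --- gives $\tfrac1r\E_x[(\mathcal{G}_rh)(X_T)\mid M_T=y]=\tfrac{S'(y)}{\psi'(y)}\int_a^y(\mathcal{G}_rh)(v)\psi(v)m'(v)\,dv$. By \eqref{canonical} (valid here since $L_\psi h$ is continuous across $\mathcal{P}$ and absolutely continuous on compact subsets of $\mathcal{I}$), $\int_{y'}^y(\mathcal{G}_rh)\psi m'=(L_\psi h)(y')-(L_\psi h)(y)$; letting $y'\downarrow a$ and invoking the hypothesis $\lim_{x\downarrow a}(L_\psi h)(x)=0$ yields $\int_a^y(\mathcal{G}_rh)\psi m'=-(L_\psi h)(y)$. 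Finally $(L_\psi\mathbbm{1})(y)=\psi'(y)/S'(y)$ because $\mathbbm{1}'\equiv0$, so $\tfrac{S'(y)}{\psi'(y)}=1/(L_\psi\mathbbm{1})(y)$, and combining the last three displays produces \eqref{condexprep}.

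The step I expect to be most delicate is not the algebra but attaching a rigorous meaning to the conditioning on the null event $\{M_T=y\}$ and checking that the closed forms in \eqref{condexp} and \eqref{condexprep} are genuine versions of the corresponding conditional expectations. The plan is to verify the defining identity $\int_{(x,z]}\big(\tfrac1r\E_x[h(X_T)\mid M_T=y]\big)\,\p_x(M_T\in dy)=\tfrac1r\E_x[h(X_T);M_T\le z]$ for every $z\in(x,b)$, which is exactly what the differentiation argument above establishes once one knows that $z\mapsto\E_x[h(X_T);M_T\le z]$ is absolutely continuous with the identified density; continuity of both sides in $y$ then upgrades the $\p_x$-a.e.\ statement to every interior $y$. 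Two subsidiary nuisances --- the vanishing of the boundary term at $a$ and the irrelevance of the finitely many non-smoothness points of $h$ in part (B) --- are routine once the Green representation \eqref{Green} and the absolute continuity of $L_\psi h$ are in hand.
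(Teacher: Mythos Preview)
Your argument is correct and, for \eqref{condexpb}, \eqref{condexpc} and part (B), essentially identical to the paper's: the strong Markov property plus the memorylessness of $T$ for \eqref{condexpb}, L'H\^opital and the Green representation for \eqref{condexpc}, and the substitution $h\rightsquigarrow\mathcal{G}_rh$ together with \eqref{canonical} and $(L_\psi\mathbbm{1})(y)=\psi'(y)/S'(y)$ for \eqref{condexprep}.

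The one genuine difference is the derivation of \eqref{condexp}. The paper simply \emph{quotes} the conditional density $\p_x[X_T\in dv\mid M_T=y]=r\tfrac{S'(y)}{\psi'(y)}\psi(v)m'(v)\,dv$ from the Borodin--Salminen handbook and integrates against $h$. You instead bootstrap from the already-proved \eqref{condexpb}: differentiating $y\mapsto\E_x[h(X_T);M_T\le y]$ and dividing by the density of $M_T$, with the key algebraic step being the identity $(L_\psi(R_rh))(y)=\int_a^y h\psi m'$ obtained from $\mathcal{G}_r(R_rh)=-h$, \eqref{canonical} and the Green formula \eqref{Green}. Your route is more self-contained (it avoids the external citation for the joint law of $(X_T,M_T)$) and makes the link between \eqref{condexp} and \eqref{condexpc} transparent, since both reduce to the same $L_\psi(R_rh)$ computation; the paper's route is shorter because it outsources that computation to a standard reference. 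Your closing paragraph on the a.e.\ meaning of the conditioning and the continuity upgrade is a welcome addition that the paper leaves implicit.
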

\begin{proof}
See Appendix \ref{app2}.
\end{proof}
Second, in order to characterize how increased volatility affects the representing function, we need to state conditions under which the sign of the impact of increased volatility on the Laplace transform of the first hitting time to a constant boundary can be unambiguously described. A set of sufficient conditions under which this sign is positive are now stated in the following.
\begin{lemma}\label{increasedvolatility}
If $\psi(x)$ is convex, then increased volatility increases or leaves unchanged the ratio $\psi(x)/\psi(z)$ for all $a<x\leq z<b$ and decreases or leaves unchanged the ratio $\psi'(x)/\psi(x)$ for all $x\in \mathcal{I}$.
\end{lemma}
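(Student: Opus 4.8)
\emph{Proof strategy.} I would fix $z\in(a,b)$ and start from the classical identity $\psi(x)/\psi(z)=\E_x\big[e^{-r\tau_z}\big]$, valid for all $x\in(a,z]$ with $\tau_z=\inf\{t\geq0:X_t=z\}$; this is the same fact already used implicitly in Lemma~\ref{jointprobabilitydist} via the marginal $\p_x(M_T\leq z)=1-\psi(x)/\psi(z)$. Let $\tilde\sigma\geq\sigma$ be a second (continuous, strictly positive) volatility coefficient, keep the drift $\mu$ unchanged, and denote by $\tilde X$, $\tilde{\mathcal A}$ and $\tilde\psi$ the associated diffusion, generator and increasing fundamental solution, so that analogously $\tilde\psi(x)/\tilde\psi(z)=\tilde\E_x\big[e^{-r\tilde\tau_z}\big]$. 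The goal is then to prove $\psi(x)/\psi(z)\leq\tilde\psi(x)/\tilde\psi(z)$ on $(a,z]$, and I would obtain it by showing that $w:=\psi/\psi(z)$ is $r$-subharmonic for $\tilde X$ on $(a,z)$ and then running a Dynkin/optional-sampling argument.

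The crucial point is the linear dependence of the generator on the diffusion coefficient: since $(\mathcal G_r w)(x)=0$ throughout $\mathcal I$,
\begin{align*}
(\tilde{\mathcal G}_r w)(x)=(\tilde{\mathcal A}w)(x)-rw(x)=\tfrac12\big(\tilde\sigma^2(x)-\sigma^2(x)\big)w''(x)\geq 0,\qquad x\in(a,z),
\end{align*}
because $\tilde\sigma\geq\sigma$ and, by the assumed convexity of $\psi$, $w''=\psi''/\psi(z)\geq0$. Since $w$ is $C^2$, increasing, and satisfies $0\leq w\leq1$ on $(a,z]$, I would apply Dynkin's formula to $e^{-rt}w(\tilde X_t)$ along the localizing sequence $\theta_n=t\wedge\tilde\tau_z\wedge\tilde\tau_{a_n}$ with $a_n\downarrow a$; the sign condition above gives $w(x)\leq\tilde\E_x\big[e^{-r\theta_n}w(\tilde X_{\theta_n})\big]$, and letting $n\to\infty$ and then $t\to\infty$, using $0\leq w\leq1$ and $r>0$, leaves only the contribution from actually hitting $z$, namely $w(x)\leq\tilde\E_x\big[e^{-r\tilde\tau_z}\big]=\tilde\psi(x)/\tilde\psi(z)$. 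Recalling $w(x)=\psi(x)/\psi(z)$, this is the first assertion.

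For the statement about $\psi'/\psi$ I would simply differentiate the inequality just obtained: fixing $x\in\mathcal I$ and taking $z=x$, the map $y\mapsto\psi(y)/\psi(x)-\tilde\psi(y)/\tilde\psi(x)$ is nonpositive on $(a,x]$, vanishes at $y=x$, and is $C^1$ there (the fundamental solutions being twice continuously differentiable), so its left derivative at $x$ is nonnegative; this reads $\psi'(x)/\psi(x)\geq\tilde\psi'(x)/\tilde\psi(x)$, i.e.\ increasing the volatility does not increase $\psi'(x)/\psi(x)$.

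The part I expect to require the most care is the passage to the limit in the Dynkin step, specifically controlling $w(\tilde X)$ near $a$ and on $\{\tilde\tau_z=\infty\}$. This I would settle by a case distinction on the Feller classification of $a$: when $a$ is natural or entrance it is not reached from the interior, so no boundary term appears, while when $a$ is exit or regular-killing one has $\psi(a+)=0$, so $w$ extends continuously by $0$ at $a$ and the boundary term vanishes as well; the event $\{\tilde\tau_z=\infty\}$ contributes nothing since $e^{-rt}w(\tilde X_t)\leq e^{-rt}\to0$. The remaining ingredients — the generator identity and the one-sided derivative comparison — are routine.
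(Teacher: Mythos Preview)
Your argument is correct and is essentially the dual of the paper's. Both rest on the identity $(\tilde{\mathcal G}_r-\mathcal G_r)u=\tfrac12(\tilde\sigma^2-\sigma^2)u''$ fed into Dynkin's formula, but with the roles swapped: you apply it to $u=\psi/\psi(z)$ under the more volatile dynamics $\tilde X$ (so that $r$-subharmonicity follows from $\psi''\geq0$, which is exactly the stated hypothesis), whereas the paper applies it to $u=\tilde\psi$ under the original dynamics $X$ (so that $r$-superharmonicity requires $\tilde\psi''\geq0$, which strictly speaking is not what the lemma assumes). In that sense your version tracks the statement more faithfully. The price you pay is at the lower boundary: the paper runs $X$ out of a compact interval $(y_0,y_1)$ and then sends $y_0\downarrow a$ using only the universal limit $\psi(y_0)/\varphi(y_0)\to0$, never touching the boundary behaviour of $\tilde X$; your case distinction, by contrast, mixes the Feller classification of $a$ for $X$ (which governs whether $\psi(a+)=0$) with that for $\tilde X$ (which governs whether the localizing time $\tilde\tau_{a_n}$ stays infinite), and these need not agree after changing the volatility. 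For the second claim the paper writes $\psi(x)/\psi(z)=\exp\bigl(-\int_x^z\psi'/\psi\bigr)$ and compares the integrands, which is equivalent to your left-derivative argument at $y=x$.
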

\begin{proof}
See Appendix \ref{app3}.
\end{proof}

\section{Representation as Expected Supremum}\label{sec 1sided}

\subsection{Problem Setting}

Our main objective is now to delineate general circumstances under which the value of a one-sided threshold policy can be expressed as the expected supremum of a monotonic representing function
and to identify that function explicitly. In what follows, we will focus on the case where the considered stopping policy can be characterized as a rule where the underlying
process is stopped as soon as it exceeds a given constant threshold. The case where the single boundary stopping rule is to exercise as soon as the underlying falls below a
given constant threshold is completely analogous and, therefore, left untreated.

Let $g:\mathcal{I}\mapsto\R$ be a continuous payoff function satisfying the condition $g^{-1}(\mathbb{R}_+)=(x_g,b)\neq\emptyset$ for some $x_g\in\mathcal{I}$ and
\begin{align}\label{integrability}
\mathbb{E}_x\left[\sup_{t\geq 0}e^{-rt}g(X_t)\right]<\infty
\end{align}
for all $x\in \mathcal{I}$. Assume also that $g\in C^1(\mathcal{I}\setminus \mathcal{P})\cap C^2(\mathcal{I}\setminus \mathcal{P})$, where
$\mathcal{P}\in \mathcal{I}$ is a finite set of points in $\mathcal{I}$ and that $|g'(x\pm)|<\infty$ and $|g''(x\pm)|<\infty$ for all $x\in \mathcal{P}$.

Given the assumed regularity conditions, let $\tau_y=\inf\{t\geq0: X_t\geq y\}$ denote the first exit time of the underlying diffusion from the set $(a, y)$,
where $y\in g^{-1}(\mathbb{R}_+)$. Define now the parameterized family of nonnegative and continuous functions $V_y:\mathcal{I}\mapsto \mathbb{R}_+$ by
\begin{align}\label{eq Vy}
V_y(x)=\E_x\left[e^{-r\tau_y}g(X_{\tau_y});\tau_y<\infty\right] =\begin{cases}
g(x)\quad &x\geq y\\
\psi(x)\frac{g(y)}{\psi(y)}\quad& x<y.
\end{cases}
\end{align}
Given representation \eqref{eq Vy}, we can now state our identification problem as follows.
\begin{problem}\label{1reuna}
(A) For a given $y\in g^{-1}(\mathbb{R}_+)$, does there exist a nonnegative function $\hat{f}:\mathcal{I}\mapsto \mathbb{R}_+$ such that for all $x\in\mathcal{I}$ we would have
\begin{align}
J_y(x):=\E_x\left[\hat{f}(M_T)\mathbbm{1}_{[y,b)}(M_T)\right]=V_{y}(x).\label{prob1}
\end{align}
(B) Under which conditions on the function $\hat{f}$ and the threshold $y$ we have
$$\hat{f}(M_T)\mathbbm{1}_{[y,b)}(M_T) =\sup_{t\in[0,T]}\{\hat{f}(X_t)\mathbbm{1}_{[y,b)}(X_t)\}$$
and, consequently,
\begin{align}
V(x)=V_y(x)=\E_x\left[\sup_{t\in[0,T]}\{\hat{f}(X_t)\mathbbm{1}_{[y,b)}(X_t)\}\right].\label{prob2}
\end{align}
\end{problem}
It's worth emphasizing  that Problem \ref{1reuna} is twofold. The first representation problem essentially asks if the
expected value of the exercise payoff accrued at the first hitting time to a constant boundary can be expressed as the expected value of an yet unknown representing function $\hat{f}$ at
the running maximum of the underlying diffusion at an independent exponentially distributed date. The second question essentially asks when the function $\hat{f}$ is such that the representation agrees with the general functional form utilized in Theorem \ref{theorem abo1} and in that way results into the value of the considered stopping problem. As we will later establish in this paper, the class of functions satisfying the first representation is strictly larger than the latter.

\subsection{Standard Sufficiency Conditions}
Before proceeding in the derivation of the representation as an expected supremum, we first need to characterize sufficient conditions under which the value $V_y$ coincides with the value of the optimal stopping problem \eqref{eq prob}. To accomplish this, we follow the Martin boundary representation approach introduced in the pioneering study \cite{Salminen1985} (for associated results focusing precisely on single-boundary problems, see \cite{CrMo14}) and
establish the following result characterizing the optimal policy. We apply this result later for the
identification of circumstances under which the value of the considered one-sided problem can be expressed as the expected supremum of a monotonic function.
\begin{lemma}\label{Martin1}
Assume that the following conditions are satisfied:
\begin{itemize}
  \item[(i)] there exists a $y^\ast = \argmax\{g(x)/\psi(x)\}\in \mathcal{I}$,
  \item[(ii)] $(\mathcal{G}_rg)(x)\leq 0$ for all $x\in [y^\ast,b)\setminus \mathcal{P}$
  \item[(iii)] $g'(x+)\leq g'(x-)$ for all $x\in[y^\ast,b)\cap \mathcal{P}$
\end{itemize}
Then $V(x)=V_{y^\ast}(x)$ and $\tau_{y^\ast}=\inf\{t\geq 0: X_t\geq y^\ast\}$ is an optimal stopping time.
\end{lemma}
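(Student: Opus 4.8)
The plan is to establish Lemma~\ref{Martin1} by the classical verification argument for the candidate value $V_{y^\ast}$ displayed in \eqref{eq Vy}: show that $V_{y^\ast}$ dominates the reward $g$ on all of $\mathcal{I}$, that $V_{y^\ast}$ is $r$-excessive (equivalently, that $(e^{-rt}V_{y^\ast}(X_t))_{t\geq0}$ is a nonnegative supermartingale), and that the stopping time $\tau_{y^\ast}$ attains it; the identity $V=V_{y^\ast}$ and the optimality of $\tau_{y^\ast}$ then follow at once.

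For the domination $V_{y^\ast}\geq g$, note first that since $g>0$ somewhere on $\mathcal{I}$ and $\psi>0$ throughout, the maximal value $g(y^\ast)/\psi(y^\ast)$ of $g/\psi$ is strictly positive; hence $g(y^\ast)>0$ and $y^\ast\in g^{-1}(\mathbb{R}_+)=(x_g,b)$. On $[y^\ast,b)$ one has $V_{y^\ast}=g$; on $(a,x_g]$ one has $g\leq0<\psi(x)g(y^\ast)/\psi(y^\ast)=V_{y^\ast}(x)$; and on $(x_g,y^\ast)$ the bound $V_{y^\ast}(x)=\psi(x)\,g(y^\ast)/\psi(y^\ast)\geq\psi(x)\,g(x)/\psi(x)=g(x)$ is exactly condition (i). This settles the domination.

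The core of the proof is the $r$-excessivity of $V_{y^\ast}$. The candidate is continuous on $\mathcal{I}$ and is $C^2$ off the finite set consisting of $y^\ast$ together with $\mathcal{P}\cap(y^\ast,b)$: on $(a,y^\ast)$ it is proportional to the $r$-harmonic function $\psi$, so $\mathcal{G}_rV_{y^\ast}=0$ there, and on $(y^\ast,b)\setminus\mathcal{P}$ it coincides with $g$, so $\mathcal{G}_rV_{y^\ast}=\mathcal{G}_rg\leq0$ by condition (ii). It remains to verify that at each of the exceptional points the left-hand derivative dominates the right-hand one, which is the downward-kink condition needed for $r$-superharmonicity. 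At a point $x\in\mathcal{P}\cap(y^\ast,b)$ this is precisely $g'(x+)\leq g'(x-)$, i.e.\ condition (iii). At $x=y^\ast$ the left derivative of $V_{y^\ast}$ equals $\psi'(y^\ast)g(y^\ast)/\psi(y^\ast)$ while the right derivative equals $g'(y^\ast+)$, and the one-sided first-order conditions for $y^\ast$ to maximise $g/\psi$ over $\mathcal{I}$ give $g'(y^\ast-)\geq g(y^\ast)\psi'(y^\ast)/\psi(y^\ast)\geq g'(y^\ast+)$; thus either $y^\ast\notin\mathcal{P}$, in which case the one-sided derivatives coincide (the smooth fit), or $y^\ast\in\mathcal{P}$, in which case the kink is again downward. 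Following the Martin boundary representation one reads off from the canonical identity \eqref{canonical} applied to $V_{y^\ast}$ that $L_\psi V_{y^\ast}$ is nondecreasing --- the generator contribution is nonpositive by (ii), and the jumps at $y^\ast$ and at the points of $\mathcal{P}\cap(y^\ast,b)$ point upward by (i) and (iii) --- so that $V_{y^\ast}$ is $r$-excessive; equivalently, the It\^o--Tanaka formula, whose local-time term at each exceptional point carries precisely the sign just established and whose absolutely continuous part is $e^{-rt}(\mathcal{G}_rV_{y^\ast})(X_t)\,dt\leq0$, shows after the standard localisation that $e^{-rt}V_{y^\ast}(X_t)$ is a nonnegative local supermartingale and hence a supermartingale.

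Finally, by the very definition \eqref{eq Vy}, $V_{y^\ast}(x)=\E_x[e^{-r\tau_{y^\ast}}g(X_{\tau_{y^\ast}});\tau_{y^\ast}<\infty]$, so $V(x)\geq V_{y^\ast}(x)$ and $\tau_{y^\ast}$ is a candidate optimiser. Conversely, for an arbitrary stopping time $\tau$ the optional stopping theorem for the nonnegative supermartingale $e^{-rt}V_{y^\ast}(X_t)$ gives $V_{y^\ast}(x)\geq\E_x[e^{-r\tau}V_{y^\ast}(X_\tau);\tau<\infty]\geq\E_x[e^{-r\tau}g(X_\tau);\tau<\infty]$ since $V_{y^\ast}\geq g$; the integrability hypothesis \eqref{integrability} is what guarantees that $V$ is finite, so that this bound is meaningful, and taking the supremum over $\tau$ yields $V_{y^\ast}\geq V$. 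Hence $V=V_{y^\ast}$ and $\tau_{y^\ast}$ is optimal. The step I expect to be the main obstacle is the kink analysis at $y^\ast$: it is there that one must convert the (possibly interior, possibly non-differentiable) maximisation of $g/\psi$ encoded in condition (i) into the correct one-sided derivative inequality for $V_{y^\ast}$, and this is the precise mechanism by which the sign condition (ii) on the generator together with the no-upward-kink conditions (i) and (iii) upgrade the candidate to an $r$-superharmonic majorant of the reward.
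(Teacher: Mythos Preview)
Your proof is correct. The verification strategy --- domination $V_{y^\ast}\geq g$ from condition (i), $r$-excessivity from the sign of $\mathcal{G}_r V_{y^\ast}$ together with the downward-kink checks at $y^\ast$ and at the points of $\mathcal{P}\cap(y^\ast,b)$, and then optional stopping --- is a legitimate and self-contained route to the conclusion.

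The paper argues slightly differently on the excessivity step. Rather than invoking It\^o--Tanaka, it fixes a reference point $x_0\in(y^\ast,b)\setminus\mathcal{P}$, forms the ratio $h_{x_0}=V_{y^\ast}/g(x_0)$, and verifies directly that the two one-sided functionals
\[
\sigma_{x_0}^{h_{x_0}}((x,b])=-\frac{\psi(x_0)}{Bg(x_0)}(L_\varphi g)(x+),\qquad
\sigma_{x_0}^{h_{x_0}}([a,x))=\frac{\varphi(x_0)}{Bg(x_0)}(L_\psi g)(x-)\mathbbm{1}_{(y^\ast,x_0]}(x)
\]
are nonnegative and monotone and sum to $1$ at $x_0$; this identifies $\sigma_{x_0}^{h_{x_0}}$ as a probability measure and hence $h_{x_0}$ as $r$-excessive via Salminen's Martin boundary representation. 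So the paper works with \emph{both} functionals $L_\psi g$ and $L_\varphi g$, whereas your aside about $L_\psi V_{y^\ast}$ alone being nondecreasing is not by itself the full Martin argument --- it is your It\^o--Tanaka half that actually closes the excessivity step. The trade-off is that the paper's approach ties the lemma into the representing-measure machinery that the rest of the manuscript exploits, while your supermartingale route is more elementary and transparent about exactly where conditions (i)--(iii) enter.
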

\begin{proof}
See Appendix \ref{app4}.
\end{proof}

\begin{remark}\label{sufficient}
It is at this point worth emphasizing that under the following slightly stricter assumptions there always exists a unique maximizing threshold
$y^\ast = \argmax\{g(x)/\psi(x)\}$ and the conditions of Lemma \ref{Martin1} are satisfied
(cf. Lemma 3.4 in \cite{AlMaRa14}):
\begin{itemize}
\item[(A)] $g^{-1}(\mathbb{R}_-)=(a,y_0)$, where $a<y_0<b$, and $b$ is unattainable for $X$,
\item[(B)]  there exists a $\tilde{x}\in \mathcal{I}$ so that $(\mathcal{G}_rg)(x) \geq 0$ for all $x\in (a,\tilde{x})\setminus \mathcal{P}$ and
 $(\mathcal{G}_rg)(x)< 0$ for all $x\in (\tilde{x},b)\setminus \mathcal{P}$,
 \item[(C)] $g'(x+)\geq g'(x-)$ for all $x\in(a,\tilde{x})\cap \mathcal{P}$ and $g'(x+)\leq g'(x-)$ for all $x\in[\tilde{x},b)\cap \mathcal{P}$
\end{itemize}
\end{remark}
These assumptions of Remark \ref{sufficient} are typically met in financial applications of optimal stopping.
Note that these conditions do not impose monotonicity requirements on the behavior of the generator $\mathcal{G}_rg$ on $\mathcal{I}\setminus\mathcal{P}$ and only the sign of
$\mathcal{G}_rg$ essentially counts.

\subsection{Characterization of the Representing Function $\hat{f}$}
Let $y\in g^{-1}(\mathbb{R}_+)$ be given. Utilizing the known distribution function of $M$ yields (cf. \cite{BorSal15}, p. 26)
\begin{align*}
J_y(x)=\E_x\left[\hat{f}(M_T)\mathbbm{1}_{[y,b)}(M_T)\right]=\psi(x)\int_{x\lor y}^b\hat{f}(z)\frac{\psi'(z)}{\psi^2(z)}dz.
\end{align*}
Given this expression, it is now sufficient to find a function $\hat{f}$ for which the identity $V_{y}(x)=J_y(x)$ holds for all $x\in \mathcal{I}$. This identity holds for $x\geq y$ provided that
the {\em Volterra integral equation of the the first kind}
\begin{align}
\frac{g(x)}{\psi(x)}=\int_x^b\hat{f}(z)\frac{\psi'(z)}{\psi^2(z)}dz\label{kasvavaid}
\end{align}
is satisfied. Standard differentiation of identity \eqref{kasvavaid} now shows that
for all $x\in [y,b)\setminus \mathcal{P}$ we have
\begin{align}\label{eq f1}
\hat{f}(x)=g(x)-\psi(x)\frac{g'(x)}{\psi'(x)},
\end{align}
coinciding with the function $\rho$ derived in \cite{BaBa} by relying on functional concavity arguments.
Utilizing \eqref{canonical} demonstrates that this representing function can be alternatively be expressed as
\begin{align}\label{eq f1alt}
\hat{f}(x)= \frac{(L_\psi g)(x)}{(L_\psi \mathbbm{1})(x)}.
\end{align}
Consequently, if $\mathcal{G}_rg\in C(\mathcal{I}\backslash \mathcal{P})\cap\mathcal{L}_r^1(\mathcal{I})$, and
$\lim_{x\downarrow a}(L_\psi g)(x)=0$, then according to Lemma \ref{jointprobabilitydist} we have
\begin{align*}
\hat{f}(z)= -\frac{1}{r}\E_x[(\mathcal{G}_rg)(X_T)|M_T=z]
\end{align*}
for all $z\in \mathcal{I}$. Our first representation result is now stated
in the following theorem.
\begin{theorem}\label{theorem inc}
Fix $y\in g^{-1}(\mathbb{R}_+)$ and let $\hat{f}$ be as in \eqref{eq f1}.
Then, if $\lim_{x\rightarrow b-}g(x)/\psi(x)=0$, we have $J_y(x)=V_y(x)$. Moreover,
if $\hat{f}(x)$ is also nonnegative and nondecreasing and $g'(x)$ is lower semicontinuous for all $x\in[y,b)$, then $V_y(x)$ is $r$-excessive for $X$.
\end{theorem}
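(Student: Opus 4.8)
The plan is to prove the two assertions of Theorem~\ref{theorem inc} separately, starting with the identity $J_y(x)=V_y(x)$ and then establishing $r$-excessivity under the additional monotonicity and regularity hypotheses.

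\textbf{Step 1: The identity $J_y(x)=V_y(x)$.} Using the explicit distribution of $M_T$ one has $J_y(x)=\psi(x)\int_{x\lor y}^{b}\hat{f}(z)\frac{\psi'(z)}{\psi^2(z)}dz$. The plan is to verify that the representing function $\hat{f}$ in \eqref{eq f1} solves the Volterra equation \eqref{kasvavaid}. First I would observe that $\frac{d}{dz}\left(\frac{g(z)}{\psi(z)}\right) = \frac{g'(z)\psi(z)-g(z)\psi'(z)}{\psi^2(z)} = -\hat{f}(z)\frac{\psi'(z)}{\psi^2(z)}$ on $\mathcal{I}\setminus\mathcal{P}$, so that integrating from $x$ to $b$ and invoking the boundary condition $\lim_{z\to b-}g(z)/\psi(z)=0$ gives $\int_x^b\hat{f}(z)\frac{\psi'(z)}{\psi^2(z)}dz=\frac{g(x)}{\psi(x)}$; one checks the finitely many points of $\mathcal{P}$ cause no difficulty since $g/\psi$ remains continuous there and the integrand is integrable. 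Hence for $x\geq y$ we get $J_y(x)=\psi(x)\cdot\frac{g(x)}{\psi(x)}=g(x)=V_y(x)$, while for $x<y$ we get $J_y(x)=\psi(x)\int_y^b\hat{f}(z)\frac{\psi'(z)}{\psi^2(z)}dz=\psi(x)\frac{g(y)}{\psi(y)}=V_y(x)$, which is precisely \eqref{eq Vy}.

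\textbf{Step 2: $r$-excessivity of $V_y$.} Assume now $\hat{f}\geq 0$ is nondecreasing and $g'$ is lower semicontinuous on $[y,b)$. I would use the characterization of $r$-excessive functions via the $r$-harmonic minorant / supermartingale property: a nonnegative function $h$ on $\mathcal{I}$ is $r$-excessive for $X$ iff $e^{-rt}h(X_t)$ is a right-continuous supermartingale, equivalently (for sufficiently regular $h$) iff $(\mathcal{G}_r h)(x)\leq 0$ in the appropriate (distributional / one-sided-derivative) sense together with the correct concavity after the Doob--Meyer-type $S$-transform. On $(a,y)$ we have $V_y(x)=\frac{g(y)}{\psi(y)}\psi(x)$, which is $r$-harmonic hence $r$-excessive there. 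On $(y,b)\setminus\mathcal{P}$ we have $V_y(x)=g(x)$, and applying \eqref{linearityGenerator}--\eqref{canonical} to $g$ together with formula \eqref{eq f1alt}, $\hat{f}=L_\psi g/L_\psi\mathbbm{1}$, shows that $(L_\psi g)'(x) = -(\mathcal{G}_r g)(x)\psi(x)m'(x)$ while $(L_\psi\mathbbm{1})'(x)=r\psi(x)m'(x)$; since $\hat{f}$ is nondecreasing, $\left(\frac{L_\psi g}{L_\psi\mathbbm{1}}\right)'\geq 0$, and because $L_\psi\mathbbm{1}>0$ with $(L_\psi\mathbbm{1})'>0$ this forces $(L_\psi g)'\leq \hat{f}\cdot(L_\psi\mathbbm{1})'$, i.e. $-(\mathcal{G}_r g)(x)\psi(x)m'(x)\geq 0$ wherever... more carefully: monotonicity of the ratio $\hat f$ gives $(L_\psi g)'(L_\psi\mathbbm1)-(L_\psi\mathbbm1)'(L_\psi g)\ge0$, and since on $[y,b)$ we have $L_\psi g = \hat f\, L_\psi\mathbbm 1$ with $L_\psi\mathbbm 1>0$ and $\hat f\ge0$, substituting yields $(L_\psi g)' \ge \hat f (L_\psi\mathbbm 1)' \ge 0$, hence $\mathcal{G}_r g\le 0$ on $(y,b)\setminus\mathcal P$. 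Thus $g$ is $r$-superharmonic on $(y,b)\setminus\mathcal P$. Finally I would patch the regions together: at $x=y$ the two branches agree in value by construction and, using the lower semicontinuity of $g'$ together with $\hat f(y)\ge 0$ (so that $g'(y)\le \psi'(y)g(y)/\psi(y)$), the left derivative of $V_y$ dominates the right derivative, giving the concavity (in the scale $S$) needed for the supermartingale property across $y$; at the points of $\mathcal{P}$ the condition $g'(x+)\le g'(x-)$ is not assumed here, so instead I would use that lower semicontinuity of $g'$ forces $g'(x-)\le g'(x+)$ there, which combined with $\mathcal G_r g\le0$ away from $\mathcal P$ and a standard local-time / Meyer--It\^o argument shows $e^{-rt}V_y(X_t)$ has no upward jumps in its finite-variation part. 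Assembling these, $e^{-rt}V_y(X_t)$ is a right-continuous nonnegative supermartingale, so $V_y$ is $r$-excessive.

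\textbf{Main obstacle.} The delicate point is Step~2 at the gluing point $y$ and at the kink set $\mathcal P$: $V_y$ is only $C^1$ away from $\mathcal P\cup\{y\}$, so ``$\mathcal{G}_r V_y\le 0$'' must be interpreted in a distributional sense, and one must rule out a positive atom in the second-derivative measure at $y$ and at each point of $\mathcal P$. The clean way is the Meyer--It\^o formula: writing $d(e^{-rt}V_y(X_t))$ as a local martingale plus $e^{-rt}(\mathcal G_r V_y)(X_t)\,dt$ plus a term $\int e^{-rt}\,\mu_{V_y}(dz)\,dL_t^z$ involving the curvature measure $\mu_{V_y}$, the supermartingale property is equivalent to $(\mathcal G_r V_y)(x)\le0$ a.e. and $\mu_{V_y}\le0$ in the sense of the drop in one-sided scale-derivatives. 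The hypotheses $\hat f\ge0$, $\hat f$ nondecreasing and $g'$ lower semicontinuous are exactly what make both of these hold, and checking this carefully — especially that lower semicontinuity of $g'$ yields the correct sign of the jumps of $S$-derivative at $\mathcal P$ — is where the real work lies. I would conclude by noting, for completeness, that an alternative and perhaps shorter route is to observe that $V_y = R_r(h)$ for the nonnegative measure/flow $h$ implicit in $-\mathcal G_r V_y$ and invoke \eqref{Green}, but the Meyer--It\^o argument is the most self-contained.
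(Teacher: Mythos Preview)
Your Step~1 is correct and essentially identical to the paper's argument: rewrite $\hat{f}$ as $-\frac{\psi^2}{\psi'}\frac{d}{dx}(g/\psi)$, integrate, and use the boundary condition.

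Your Step~2 takes a genuinely different route from the paper, and it contains a real error at the kink set $\mathcal{P}$. You claim that lower semicontinuity of $g'$ forces $g'(x-)\le g'(x+)$ at points of $\mathcal{P}$; this is false (take $g'(x-)=2$, $g'(x+)=1$, $g'(x)=1$, which is l.s.c.), and worse, $g'(x-)\le g'(x+)$ is a \emph{convex} kink, which would produce a positive atom in the curvature measure and \emph{destroy} the supermartingale property, not preserve it. The correct observation is that the assumed monotonicity of $\hat{f}$ on $[y,b)$ already gives, at each $p\in\mathcal{P}\cap[y,b)$,
\[
\hat{f}(p+)-\hat{f}(p-)=\bigl(g'(p-)-g'(p+)\bigr)\frac{\psi(p)}{\psi'(p)}\ge 0,
\]
i.e.\ $g'(p-)\ge g'(p+)$, which is the concave-kink direction you need for the Meyer--It\^o argument. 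With this correction your direct approach can be made to work, but the role of the l.s.c.\ hypothesis on $g'$ remains unexplained in your scheme.

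The paper's proof of Step~2 is much shorter and clarifies exactly why that hypothesis is there. Since $g,\psi,\psi'$ are continuous and $g'$ is lower semicontinuous on $[y,b)$, the function $\hat{f}=g-g'\psi/\psi'$ is \emph{upper} semicontinuous on $[y,b)$; together with nonnegativity and monotonicity, $\hat{f}(x)\mathbbm{1}_{[y,b)}(x)$ is nondecreasing, nonnegative, and upper semicontinuous on $\mathcal{I}$, hence $\hat{f}(M_T)\mathbbm{1}_{[y,b)}(M_T)=\sup_{t\le T}\hat{f}(X_t)\mathbbm{1}_{[y,b)}(X_t)$. One then invokes Proposition~2.1 of F\"ollmer--Knispel \cite{FoKn1}, which states that an expected supremum of this form is automatically $r$-excessive. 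Since $J_y=V_y$ by Step~1, this finishes the proof with no generator computation, no Meyer--It\^o formula, and no patching at $y$ or at $\mathcal{P}$. The l.s.c.\ assumption on $g'$ is used \emph{only} to obtain u.s.c.\ of $\hat{f}$, not to control jump signs directly.
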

\begin{proof}
The first claim follows directly from identity \eqref{kasvavaid} after noticing that the representing function can be re-expressed for all $x\in \mathcal{I}\backslash \mathcal{P}$  as
\begin{align*}
\hat{f}(x)=-\frac{\psi^2(x)}{\psi'(x)}\frac{d}{dx}\left(\frac{g(x)}{\psi(x)}\right)
\end{align*}
and invoking the condition  $\lim_{x\rightarrow b-}g(x)/\psi(x)=0$.
Noticing that
since $g, \psi$, and $\psi'$ are continuous the lower semicontinuity of $g'$ on $[y,b)$ guarantees that $\hat{f}$ is upper semicontinuous on $[y,b)$ as well. If $\hat{f}$ is also nonnegative and nondecreasing, then $\hat{f}(x)\mathbbm{1}_{[y,b)}(x)$ is
nondecreasing, nonnegative, and upper semicontinuous  on $\mathcal{I}$. In that case $\hat{f}(M_T)\mathbbm{1}_{[y,b)}(M_T)=\sup_{t\in[0,T]}\{\hat{f}(X_t)\mathbbm{1}_{[y,b)}(X_t)\}$ and Proposition 2.1 in \cite{FoKn1} then guarantees that
$J_y(x)$ is $r$-excessive for $X$. Since $J_y(x)=V_y(x)$ the alleged result follows.
\end{proof}
Theorem \ref{theorem inc} shows that when $\hat{f}$ is chosen according to the rule \eqref{eq f1}
representation $J_y=V_y$ is valid provided that the limiting condition $\lim_{x\rightarrow b-}g(x)/\psi(x)=0$ is met.
Moreover, Theorem \ref{theorem inc} also shows that if $\hat{f}(x)\mathbbm{1}_{[y,b)}(x)$ is also nonnegative, nondecreasing, and upper semicontinuous, then the
representation is $r$-excessive for the underlying diffusion $X$. This observation is of interest since it demonstrates that the needed monotonicity of the representing function does not, in principle, require twice differentiability of the exercise payoff $g$. This is especially beneficial in the verification of the $r$-excessivity of a value since it essentially reduces the analysis into the analysis of the sign, monotonicity and semicontinuity of $\hat{f}$.
Note, however, that the representation needs not to majorize the exercise payoff and, therefore,
it does not necessarily coincide with the value of the considered stopping problem. Moreover, the required conditions for  $\hat{f}(x)\mathbbm{1}_{[y,b)}(x)$
are sufficient but {\em not necessary}  for the $r$-excessivity of $J_y$. As we will later see, there are circumstances where $J_y$ is $r$-excessive even
when $\hat{f}(x)\mathbbm{1}_{[y,b)}(x)$ is not monotonic.

An interesting comparative static result characterizing the sign of the relationship between increased volatility and the representing function $\hat{f}$ defined by \eqref{eq f1} is now summarized in the the next theorem.
\begin{theorem}\label{compstatthm}
Assume that the exercise reward $g$ is nondecreasing and that $\psi$ is convex. Then, increased volatility decreases or leaves
unchanged the value of the representing function $\hat{f}$ defined by \eqref{eq f1}. Moreover, increased volatility increases the expected value $\E_x[f(M_T)]$ for nondecreasing functions $f:\mathcal{I}\mapsto \mathbb{R}_+$ satisfying $f\in \mathcal{L}_r^1(\mathcal{I})$.
\end{theorem}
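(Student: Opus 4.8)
The plan is to reduce both assertions to Lemma~\ref{increasedvolatility} after writing the relevant quantities in a form that isolates their dependence on the volatility coefficient.

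For the first assertion I would start from the closed form $\hat f(x)=g(x)-g'(x)\,\psi(x)/\psi'(x)$ of \eqref{eq f1} (read with one--sided derivatives at the points of $\mathcal P$) and observe that neither $g$ nor $g'$ is affected by a change of the volatility. Since $g$ is nondecreasing, $g'(x)\ge 0$; since $\psi$ is the increasing fundamental solution, $\psi$ and $\psi'$ are strictly positive, so $\psi(x)/\psi'(x)>0$. Under the convexity hypothesis on $\psi$, Lemma~\ref{increasedvolatility} states that increased volatility does not increase the ratio $\psi'(x)/\psi(x)$, hence does not decrease its reciprocal $\psi(x)/\psi'(x)$; multiplying by $g'(x)\ge 0$ shows that $g'(x)\psi(x)/\psi'(x)$ does not decrease, and therefore $\hat f(x)=g(x)-g'(x)\psi(x)/\psi'(x)$ does not increase. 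That is the first claim.

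For the second assertion I would express $\E_x[f(M_T)]$ through the law of the running maximum at the independent exponential time, using $\p_x(M_T\ge c)=\psi(x)/\psi(c)$ for $c\ge x$ and $\p_x(M_T\ge c)=1$ for $c\le x$. Because $f$ is nonnegative and nondecreasing it admits the layer--cake representation $f(z)=f(a+)+\int_{(a,b)}\mathbbm{1}_{[c,b)}(z)\,df(c)$ with $df$ a nonnegative measure, so Tonelli's theorem gives
\begin{align*}
\E_x\!\left[f(M_T)\right]=f(x)+\int_{(x,b)}\frac{\psi(x)}{\psi(c)}\,df(c).
\end{align*}
Here $f(x)$ and $df$ do not depend on the volatility, $df\ge 0$, and by Lemma~\ref{increasedvolatility} each coefficient $\psi(x)/\psi(c)$ with $x\le c$ does not decrease when the volatility is increased; integrating this pointwise inequality against $df$ shows that $\E_x[f(M_T)]$ does not decrease, which is the second claim. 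The membership $f\in\mathcal L_r^1(\mathcal I)$ is used only to guarantee that $\E_x[f(M_T)]$ is finite.

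Since the content of each step is precisely Lemma~\ref{increasedvolatility}, I do not anticipate a genuine obstacle; the only point requiring mild care is to carry out the decomposition in the second part so that Lemma~\ref{increasedvolatility} is applied solely to ratios $\psi(x)/\psi(c)$ with $c\ge x$ (the restriction of $df$ to $(a,x]$ contributing the volatility--free term $f(x)$), and to invoke the tail formula $\p_x(M_T\ge c)=\psi(x)/\psi(c)$, $c\ge x$, consistently --- equivalently $\E_x[e^{-r\tau_c}]=\psi(x)/\psi(c)$ with $\tau_c=\inf\{t\ge0:X_t\ge c\}$.
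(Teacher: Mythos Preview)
Your proposal is correct and follows essentially the same route as the paper: both parts are reduced to Lemma~\ref{increasedvolatility}, the first via the explicit form $\hat f(x)=g(x)-g'(x)\psi(x)/\psi'(x)$ together with $g'\ge 0$, and the second via the layer--cake/Fubini identity $\E_x[f(M_T)]=f(x)+\int_{(x,b)}\frac{\psi(x)}{\psi(c)}\,df(c)$. The only cosmetic difference is that you spell out the layer--cake decomposition of $f$ explicitly, whereas the paper invokes Fubini directly to arrive at the same integral expression.
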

\begin{proof}
As shown in Lemma \ref{increasedvolatility}, the assumed convexity of $\psi$ guarantees that increased volatility decreases the logarithmic growth rate $\psi'/\psi$. Consequently, if the reward $g$ is nondecreasing on $\mathcal{I}$, then increased volatility increases the product $g'\psi/\psi'$ for all $x\in \mathcal{I}\backslash \mathcal{P}$. However, the assumed monotonicity of $g$ and the existence of the left- and right-hand limits at all $x\in \mathcal{P}$ demonstrates that increased volatility increases the product $g'(x\pm)\psi(x)/\psi'(x)$ for all $x\in \mathcal{P}$ as well. Applying this finding to the definition \eqref{eq f1} of the representing function $\hat{f}$ proves the first claim. On the other hand, utilizing the assumed monotonicity of the function $f$ in connection with Fubini's theorem yields
\begin{align*}
\E_x[f(M_T)] = f(x) + \int_x^b\p_x[M_T\geq v]df(v) = f(x) + \int_x^b\frac{\psi(x)}{\psi(v)}df(v)
\end{align*}
from which the alleged comparative static result follows.
\end{proof}
Theorem \ref{compstatthm} characterizes circumstances under which increased volatility unambiguously decreases or leaves unchanged the representing function $\hat{f}$ and increases or leaves unchanged the expected value of nondecreasing functions depending on the running supremum $M$. As we will later observe, both of these results have economically interesting consequences.

Having characterized the basic properties of the representing function $\hat{f}$, we are now in position to establish the following theorem connecting the representing function approach to standard sufficiency conditions.
\begin{theorem}\label{meidan esitys1}
Assume that the conditions of Lemma \ref{Martin1} are satisfied, that $\lim_{x\rightarrow b}g(x)/\psi(x)=0$, and that $g'(x)$ is lower semicontinuous on $[y^\ast,b)$. Then,
$$
V(x)=V_{y^\ast}(x)=J_{y^\ast}(x)=\E_x\left[\hat{f}(M_T)\mathbbm{1}_{[y^\ast,b)}(M_T)\right].
$$
\end{theorem}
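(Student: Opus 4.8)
The statement is essentially a synthesis of Lemma~\ref{Martin1}, Theorem~\ref{theorem inc}, and the defining identity \eqref{prob1}, so the plan is to verify that the hypotheses of Theorem~\ref{meidan esitys1} put us in a position to invoke each of these in turn, and then to record what the lower semicontinuity assumption on $g'$ buys on top of the displayed equalities. The only preliminary point to settle is that $y^\ast\in g^{-1}(\mathbb{R}_+)$, which is needed for $V_{y^\ast}$, $J_{y^\ast}$ and the representing function $\hat f$ to be defined in the first place: since $g^{-1}(\mathbb{R}_+)=(x_g,b)\neq\emptyset$ and $\psi>0$, we have $g/\psi>0$ on $(x_g,b)$ and $g/\psi\leq 0$ on $(a,x_g]$, so $\max_{x\in\mathcal{I}}g(x)/\psi(x)>0$ is necessarily attained inside $(x_g,b)$, whence $y^\ast=\argmax\{g/\psi\}\in g^{-1}(\mathbb{R}_+)$.

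Granting this, the three equalities are immediate. Conditions (i)--(iii) of Lemma~\ref{Martin1} are assumed, so that lemma gives $V(x)=V_{y^\ast}(x)$ for all $x\in\mathcal{I}$ with $\tau_{y^\ast}$ optimal. Since $y^\ast\in g^{-1}(\mathbb{R}_+)$ and $\lim_{x\to b-}g(x)/\psi(x)=0$, the first assertion of Theorem~\ref{theorem inc}, applied to $\hat f$ as in \eqref{eq f1}, yields $J_{y^\ast}(x)=V_{y^\ast}(x)$ for all $x\in\mathcal{I}$. Finally $J_{y^\ast}(x)=\E_x[\hat f(M_T)\mathbbm{1}_{[y^\ast,b)}(M_T)]$ is merely \eqref{prob1}, and chaining the three identities gives the claim.

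The remaining hypothesis serves to make the proof also deliver that the representation is of the type in Theorem~\ref{theorem abo1}. Write $\hat f=(L_\psi g)/(L_\psi\mathbbm{1})$ as in \eqref{eq f1alt}; since $(L_\psi\mathbbm{1})(x)=\psi'(x)/S'(x)>0$, the sign and monotonicity of $\hat f$ on $[y^\ast,b)$ are those of $(L_\psi g)$. By \eqref{linearityGenerator} one has $(L_\psi g)'=-(\mathcal{G}_rg)\,\psi\,m'\geq 0$ on $[y^\ast,b)\setminus\mathcal{P}$ by (ii), while at each $p\in\mathcal{P}$ the jump equals $\tfrac{\psi(p)}{S'(p)}\bigl(g'(p-)-g'(p+)\bigr)\geq 0$ by (iii); hence $(L_\psi g)$, and so $\hat f$, is nondecreasing on $[y^\ast,b)$. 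As $g/\psi$ is maximized at $y^\ast$ its right derivative there is nonpositive, giving $(L_\psi g)(y^\ast+)=-\tfrac{\psi^2(y^\ast)}{S'(y^\ast)}\tfrac{d}{dx}(g/\psi)(y^\ast+)\geq 0$, and monotonicity then forces $\hat f\geq 0$ on $[y^\ast,b)$. Continuity of $g,\psi,\psi'$ and lower semicontinuity of $g'$ make $\hat f$ upper semicontinuous on $[y^\ast,b)$ (as in the proof of Theorem~\ref{theorem inc}); setting $\hat f(y^\ast):=\hat f(y^\ast+)$ when $y^\ast\in\mathcal{P}$ keeps $\hat f\mathbbm{1}_{[y^\ast,b)}$ upper semicontinuous at $y^\ast$, since the one-sided limit from the left is annihilated by the indicator. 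Thus $\hat f\mathbbm{1}_{[y^\ast,b)}$ satisfies (a)--(c) of Theorem~\ref{theorem abo1}, $\hat f(M_T)\mathbbm{1}_{[y^\ast,b)}(M_T)=\sup_{t\in[0,T]}\{\hat f(X_t)\mathbbm{1}_{[y^\ast,b)}(X_t)\}$, and the second claim of Theorem~\ref{theorem inc} shows that $V_{y^\ast}=J_{y^\ast}$ is $r$-excessive.

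The equalities themselves are pure assembly once $y^\ast\in g^{-1}(\mathbb{R}_+)$ is noted, so the only genuine work is the third paragraph: converting the sign condition (ii) on $\mathcal{G}_rg$ and the one-sided derivative inequality (iii) into the nonnegativity and monotonicity of $\hat f$. The one place where care is needed is the behaviour at $x=y^\ast$ itself when $y^\ast\in\mathcal{P}$: there smooth fit fails, $\hat f$ jumps at $y^\ast$ and is only upper semicontinuous, so one cannot appeal to a stationarity (first-order) condition and must instead argue through the one-sided optimality inequalities of $g/\psi$ at its maximizer.
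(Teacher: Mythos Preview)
Your first two paragraphs are correct and are essentially the paper's own proof: apply Lemma~\ref{Martin1} to get $V=V_{y^\ast}$, then the first assertion of Theorem~\ref{theorem inc} to get $V_{y^\ast}=J_{y^\ast}$, and read off the last equality from the definition \eqref{prob1}. Your preliminary check that $y^\ast\in g^{-1}(\mathbb{R}_+)$ is a welcome clarification that the paper leaves implicit.

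Your third paragraph, however, overshoots the statement and contains a genuine error. The theorem does \emph{not} claim that $\hat f$ is nondecreasing on $[y^\ast,b)$ or that the representation is of expected-supremum type; indeed the paper explicitly remarks, immediately after this theorem, that the conditions of Lemma~\ref{Martin1} do not by themselves yield monotonicity of $\hat f$, and this is precisely why the subsequent Lemma~\ref{lemma mon1} and Theorem~\ref{meidan yleinen esitys1} impose \emph{additional} hypotheses (concavity/convexity, or monotonicity of $\mathcal{G}_rg$). The flaw in your argument is the assertion that ``the sign and monotonicity of $\hat f$ on $[y^\ast,b)$ are those of $(L_\psi g)$''. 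For the sign this is fine, since $(L_\psi\mathbbm{1})(x)=\psi'(x)/S'(x)>0$. But for monotonicity it is false in general: the denominator $(L_\psi\mathbbm{1})(x)$ is itself strictly increasing (its derivative is $r\psi m'>0$ by \eqref{linearityGenerator}), so a nondecreasing nonnegative numerator divided by a strictly increasing positive denominator need not be nondecreasing. Condition~(ii) of Lemma~\ref{Martin1} gives only $(\mathcal{G}_rg)\leq 0$, not that $(\mathcal{G}_rg)$ is nonincreasing; the latter is exactly the extra hypothesis in part~(B) of Lemma~\ref{lemma mon1}. So you cannot conclude that $\hat f\mathbbm{1}_{[y^\ast,b)}$ satisfies condition~(a) of Theorem~\ref{theorem abo1}, nor that $\hat f(M_T)\mathbbm{1}_{[y^\ast,b)}(M_T)=\sup_{t\in[0,T]}\hat f(X_t)\mathbbm{1}_{[y^\ast,b)}(X_t)$, from the hypotheses of the present theorem alone.

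The lower-semicontinuity hypothesis on $g'$ is not actually used in establishing the displayed equalities (neither in your argument nor in the paper's), so there is no need to manufacture a role for it here; it is relevant only once one \emph{does} have monotonicity and wishes to invoke the second assertion of Theorem~\ref{theorem inc}, which happens in the later results.
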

\begin{proof}
It is clear that the conditions of the first claim of Theorem \ref{theorem inc} are satisfied. Consequently, $J_{y^\ast}(x)=V_{y^\ast}(x)$. The alleged result now follows from Lemma \ref{Martin1}.
\end{proof}
Theorem \ref{meidan esitys1} states a set of conditions under which the value of the optimal stopping strategy can be expressed as the expected value of the mapping $\hat{f}$ at the running maximum of the underlying diffusion. However, this does not yet guarantee that the value of the stopping problem could be expressed as an expected supremum since that requires in addition to the conditions of Theorem \ref{meidan esitys1} the monotonicity of $\hat{f}$. Moreover, Theorem \ref{meidan esitys1} relies on a set of sufficiency conditions based on the sign of $\mathcal{G}_rg$ and as such utilizes second order properties of the exercise payoff. Hence, it is of interest to investigate if at least part of the assumptions could be relaxed in the verification of optimality and the validity of the representation as an expected supremum. A set of sufficient conditions resulting into the desired outcome are summarized in our next theorem.
\begin{theorem}\label{meidan yleinen esitys1}
Assume that the exercise payoff $g$ is nondecreasing, that there is a unique interior threshold $y^\ast\in \mathcal{I}$ so that $\hat{f}(x\pm) \leq 0$ for $x\in (a,y^\ast)$ and $\hat{f}(x\pm) > 0$ for $x\in (y^\ast,b)$, that $g'$ is lower semicontinuous on $[y^\ast,b)$,
that $\hat{f}$ is nondecreasing on $[y^\ast,b)$, and that $g(x)/\psi(x)\downarrow 0$ as $x\uparrow b$. Then,
\begin{align}\label{expsup}
V(x)=V_{y^\ast}(x)=J_{y^\ast}(x)=\E_x\left[\sup_{t\in[0,T]}\{\hat{f}(X_t)\mathbbm{1}_{[y^\ast,b)}(X_t)\}\right]
\end{align}
for all $x\in \mathcal{I}$.
\end{theorem}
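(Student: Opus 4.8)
The plan is to reduce the statement to an application of Theorem~\ref{theorem abo1}. Since $g$ need not be non-negative, I would first pass to $g^{+}:=\max(g,0)$: for any stopping time $\tau$ the rule that stops at $\tau$ only on $\{g(X_\tau)>0\}$ and never otherwise has payoff at least $\E_x[e^{-r\tau}g^{+}(X_\tau)]$, while $g^{+}\ge g$ gives the reverse bound, so $V(x)=\sup_\tau\E_x[e^{-r\tau}g^{+}(X_\tau)]$; moreover $g^{+}$ is non-negative, continuous (hence lower semicontinuous) and inherits the integrability condition \eqref{integrability}. Because $g$ is nondecreasing and $\psi,\psi',g'\ge 0$, formula \eqref{eq f1} gives $0<\hat f(x)\le g(x)$ on $(y^{\ast},b)\setminus\mathcal P$, so by continuity $g\ge 0$ on $[y^{\ast},b)$; thus $g^{+}\equiv g$ there and $\hat f$ is the same whether built from $g$ or $g^{+}$.

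Next I would take $\bar f:=\hat f\,\mathbbm 1_{[y^{\ast},b)}$, where $\hat f$ denotes the upper semicontinuous version of \eqref{eq f1} furnished by the lower semicontinuity of $g'$, exactly as in the proof of Theorem~\ref{theorem inc}. Then $\bar f$ vanishes on $(a,y^{\ast})$, equals the nondecreasing function $\hat f$ on $[y^{\ast},b)$ with $\hat f(y^{\ast})\ge 0$ (which follows from upper semicontinuity together with $\hat f>0$ just above $y^{\ast}$), and is strictly positive on $(y^{\ast},b)$; hence $\bar f$ is nonnegative, nondecreasing and upper semicontinuous on $\mathcal I$, which is condition (a) of Theorem~\ref{theorem abo1} (the value at the single point $y^{\ast}$ plays no role, since $\{M_T=y^{\ast}\}$ is a null set). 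Since $\bar f$ is nondecreasing and $X$ is continuous, $\sup_{t\in[0,T]}\bar f(X_t)=\bar f(M_T)=\hat f(M_T)\mathbbm 1_{[y^{\ast},b)}(M_T)$, so $\E_x[\sup_{t\in[0,T]}\bar f(X_t)]=J_{y^{\ast}}(x)$, and Theorem~\ref{theorem inc} (applicable because $g(x)/\psi(x)\downarrow 0$ as $x\uparrow b$) gives $J_{y^{\ast}}(x)=V_{y^{\ast}}(x)$. For $x\ge y^{\ast}$ this equals $g(x)=g^{+}(x)$, which is condition (b) of Theorem~\ref{theorem abo1}.

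The remaining point, and the heart of the argument, is condition (c): $\E_x[\sup_{t\in[0,T]}\bar f(X_t)]=V_{y^{\ast}}(x)\ge g^{+}(x)$ for $x\le y^{\ast}$. As $V_{y^{\ast}}(x)=\psi(x)g(y^{\ast})/\psi(y^{\ast})\ge 0$, it suffices to prove $V_{y^{\ast}}(x)\ge g(x)$, i.e. $g(x)/\psi(x)\le g(y^{\ast})/\psi(y^{\ast})$ for $x\in(a,y^{\ast})$. Here I would invoke the identity $\frac{d}{dx}\big(g(x)/\psi(x)\big)=-\frac{\psi'(x)}{\psi^{2}(x)}\hat f(x)$ noted in the proof of Theorem~\ref{theorem inc}: the hypothesis $\hat f(x\pm)\le 0$ on $(a,y^{\ast})$ makes this derivative nonnegative off the finite set $\mathcal P$, and since $g/\psi$ is continuous, gluing across the points of $\mathcal P$ shows $g/\psi$ is nondecreasing on $(a,y^{\ast})$; letting $x\uparrow y^{\ast}$ gives the bound. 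Theorem~\ref{theorem abo1} applied to $(g^{+},\bar f,y^{\ast})$ then yields $V(x)=\E_x[\sup_{t\in[0,T]}\{\bar f(X_t)\mathbbm 1_{[y^{\ast},b)}(X_t)\}]$, which is \eqref{expsup} since $\bar f\equiv\hat f$ on $[y^{\ast},b)$; and the chain of equalities above simultaneously gives $V=V_{y^{\ast}}=J_{y^{\ast}}$.

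The main obstacle is thus condition (c): converting the one-sided sign information $\hat f(x\pm)\le 0$ on $(a,y^{\ast})$ into monotonicity of $g/\psi$, hence into the ``fit from below'' inequality $V_{y^{\ast}}\ge g$, while taking proper care of the non-smoothness set $\mathcal P$ and of the reduction to the non-negative payoff $g^{+}$. As an alternative to Theorem~\ref{theorem abo1}, one can observe that under the stated hypotheses Theorem~\ref{theorem inc} already gives that $V_{y^{\ast}}$ is $r$-excessive; combining this with $V_{y^{\ast}}\ge g$ and the fact that $V$ is the smallest $r$-excessive majorant of $g$ yields $V=V_{y^{\ast}}$ directly, and the expected-supremum form then follows from the monotonicity and upper semicontinuity of $\bar f$ exactly as above.
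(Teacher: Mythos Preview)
Your proposal is correct, and your \emph{alternative} route at the end is essentially the paper's own proof: the paper uses the derivative identity $\frac{d}{dx}\big(g(x)/\psi(x)\big)=-\frac{\psi'(x)}{\psi^{2}(x)}\hat f(x)$ together with the sign assumption on $\hat f$ to conclude that $y^{\ast}=\argmax\{g/\psi\}$ (hence $V_{y^{\ast}}\ge g$), then invokes Theorem~\ref{theorem inc} to obtain $J_{y^{\ast}}=V_{y^{\ast}}$ and its $r$-excessivity, and concludes $V=V_{y^{\ast}}$ by the standard verification argument (excessive majorant attained by $\tau_{y^{\ast}}$); the expected-supremum form then follows from the monotonicity of $\hat f\mathbbm 1_{[y^{\ast},b)}$.

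Your \emph{primary} route, by contrast, packages the same ingredients into a direct verification of conditions (a)--(c) of Theorem~\ref{theorem abo1}. This is a genuinely different organization: you need the extra reduction to $g^{+}$ (since Theorem~\ref{theorem abo1} requires a nonnegative payoff), and you need to argue away the borderline case $\hat f(y^{\ast})=0$ in condition~(a). What you gain is that the optimality of $\tau_{y^{\ast}}$ and the identity $V=J_{y^{\ast}}$ come out simultaneously from Theorem~\ref{theorem abo1}, rather than via the separate excessive-majorant step. The paper's route is slightly leaner because it avoids the $g^{+}$ bookkeeping and the edge case at $y^{\ast}$, while your route makes the connection to the representation theorem of \cite{ChSaTa} more explicit. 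Either way, the substantive step---turning the sign hypothesis on $\hat f$ into the majorization $V_{y^{\ast}}\ge g$ via the derivative of $g/\psi$---is the same in both arguments.
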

\begin{proof}
Since
$$
\frac{d}{dx}\left(\frac{g(x)}{\psi(x)}\right)=-\frac{\psi'(x)}{\psi^2(x)}\hat{f}(x)
$$
for all $x\in \mathcal{I}\backslash\mathcal{P}$, we notice that our assumptions on the sign of $\hat{f}$ guarantee that $g(x)/\psi(x)$ is increasing on $(a,y^\ast)$ and decreasing on $(y^\ast,b)$. Consequently, $y^\ast = \argmax\{g(x)/\psi(x)\}$ and $y^\ast\in\{x\in \mathcal{I}: V(x)=g(x)\}$ by Theorem 2.1 in \cite{ChIr11}. The monotonicity of $g$ and positivity of $\hat{f}$ on $(y^\ast,b)$ then imply that $y^\ast\in g^{-1}(\mathbb{R}_+)$. Since $g(x)/\psi(x)\downarrow 0$ as $x\uparrow b$, we find by utilizing Theorem \ref{theorem inc} that $V_{y^\ast}(x)=J_{y^\ast}(x)$ for all $x\in \mathcal{I}$. Moreover, the lower semicontinuity of $g'$ and monotonicity and positivity of $\hat{f}$ on $(y^\ast,b)$ guarantee that the conditions of the second claim of Theorem \ref{theorem inc} are satisfied and, therefore, that $V_{y^\ast}=J_{y^\ast}$ is $r$-excessive for $X$. Since $V_{y^\ast}$ majorizes the payoff $g$ for all $x\in \mathcal{I}$ and $V_{y^\ast}$ can be attained by utilizing the stopping strategy $\tau_{y^\ast}$ we notice that $V=V_{y^\ast}=J_{y^\ast}$. Finally the monotonicity of $\hat{f}$ implies that $\hat{f}(M_T)\mathbbm{1}_{[y^\ast,b)}(M_T)=\sup_{t\in[0,T]}\{\hat{f}(X_t)\mathbbm{1}_{[y^\ast,b)}(X_t)\}$ from which the alleged identity follows.
\end{proof}
Theorem \ref{meidan yleinen esitys1} states a set of conditions under which the value of the considered stopping problem admits a representation as an expected supremum. Instead of having to rely on the behavior of $\mathcal{G}_rg$, Theorem \ref{meidan yleinen esitys1} demonstrates that the verification of the optimality of a single boundary stopping strategy can be reduced to the study of the sing, monotonicity and sufficient regularity of the representing function $\hat{f}$. This observation is very useful especially in situations where the fundamental solution $\psi$ has a simple functional form since under such circumstances the verification of the validity of the conditions of Theorem \ref{meidan yleinen esitys1} is straightforward. However, as soon as $\psi$ takes more complicated forms, establishing the monotonicity of $\hat{f}$ becomes significantly more challenging and requires further analysis. In order to characterize relatively general circumstances
under which the function $\hat{f}$ is indeed monotonic, we first state the following auxiliary lemma.
\begin{lemma}\label{lemma mon1}
Let $y\in g^{-1}(\mathbb{R}_+)$ be given. Assume that either
\begin{itemize}
 \item[(A)] $g(x)$ is concave and $\psi(x)$ is convex on $[y,b)$, or
 \item[(B)]  there is a $z\in (a,y)$ so that $g(x)/\psi(x)$ is locally increasing at $z$,  $g'(x+)\leq g'(x-)$ for all $x\in (z,b)\cap\mathcal{P}$, and
 $(\mathcal{G}_rg)(x)$ is non-increasing and non-positive for all $x\in (z,b)$.
 \end{itemize}
Then, the function  $\hat{f}(x)$ characterized by \eqref{eq f1} is non-decreasing on $[y,b)$.
\end{lemma}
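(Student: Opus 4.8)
The plan is to reduce the claim to the monotonicity of the single ratio $g'/\psi'$ and then to verify this separately under the two sets of hypotheses. From \eqref{eq f1} one has $\hat{f}=g-\psi\,(g'/\psi')$, so differentiating off $\mathcal{P}$ gives $\hat{f}'=-\psi\,(g'/\psi')'$, while at a point $x_0\in\mathcal{P}$ the continuity of $g$ makes the jump of $\hat{f}$ equal to $-\psi(x_0)\,\Delta(g'/\psi')(x_0)$, where $\Delta$ denotes the size of the jump. Since $\psi>0$, $\psi'>0$ and $\psi'$ is continuous on $[y,b)$, it follows that $\hat{f}$ is non-decreasing on $[y,b)$ exactly when $g'/\psi'$ is non-increasing there, that is, non-increasing on each subinterval of $[y,b)\setminus\mathcal{P}$ and satisfying $g'(x_0+)\le g'(x_0-)$ at each kink $x_0\in\mathcal{P}\cap[y,b)$. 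So the whole lemma amounts to establishing this property of $g'/\psi'$.

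Under (A) this is immediate. Concavity of $g$ makes $g'$ non-increasing on $[y,b)\setminus\mathcal{P}$ and gives $g'(x_0+)\le g'(x_0-)$ at every kink, and since $g$ is positive and concave on $[y,b)$ it is non-decreasing there, so $g'\ge0$; convexity of the increasing function $\psi$ makes $\psi'$ positive and non-decreasing. Hence on every subinterval of $[y,b)\setminus\mathcal{P}$ the ratio $g'/\psi'$ is a non-negative non-increasing function divided by a positive non-decreasing one, so it is non-increasing, and it jumps down at each kink.

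Under (B) I would rewrite the sign condition through the generator. Eliminating $\psi''$ by $\mathcal{G}_r\psi=0$ and $g''$ by $\mathcal{G}_rg=\tfrac12\sigma^2g''+\mu g'-rg$, a short computation shows that $(g'/\psi')'\le0$ on $(z,b)\setminus\mathcal{P}$ is equivalent to $\Phi\ge0$ there, where $\Phi:=-(L_\psi\mathbbm{1})\,\mathcal{G}_rg-r\,(L_\psi g)$ and $(L_\psi\mathbbm{1})=\psi'/S'>0$. Because $\mathcal{G}_r\mathbbm{1}=-r$, formula \eqref{linearityGenerator} yields $(L_\psi\mathbbm{1})'=r\psi m'$ and $(L_\psi g)'=-(\mathcal{G}_rg)\psi m'$, so the $\psi m'$ terms cancel in $\Phi'$ and one is left with $\Phi'=-(L_\psi\mathbbm{1})(\mathcal{G}_rg)'$, which is $\ge0$ on $(z,b)\setminus\mathcal{P}$ since $\mathcal{G}_rg$ is non-increasing there; thus $\Phi$ is non-decreasing on every subinterval of $(z,b)\setminus\mathcal{P}$. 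For the initial value, ``$g/\psi$ locally increasing at $z$'' together with the identity $(g/\psi)'=-(\psi'/\psi^2)\hat{f}$ off $\mathcal{P}$ (immediate from \eqref{eq f1}) forces $\hat{f}(z+)\le0$, so $(L_\psi g)(z+)=\hat{f}(z+)(L_\psi\mathbbm{1})(z+)\le0$ by \eqref{eq f1alt}, and combined with $\mathcal{G}_rg(z+)\le0$ this gives $\Phi(z+)\ge0$. The assumed inequalities $g'(x+)\le g'(x-)$ on $(z,b)\cap\mathcal{P}$ already take care of the kinks of $g'/\psi'$ in $[y,b)$, so what remains is to propagate $\Phi\ge0$ across the finitely many kinks lying in $(z,b)$; granting that, $\Phi\ge0$ on all of $[y,b)\setminus\mathcal{P}$ and, by the first paragraph, $\hat{f}$ is non-decreasing on $[y,b)$.

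I expect the last point --- controlling $\Phi$ across a kink $x_0\in\mathcal{P}\cap(z,b)$ --- to be the main obstacle. A direct computation using $r\psi-\mu\psi'=\tfrac12\sigma^2\psi''$ gives the jump $\Delta\Phi(x_0)=\tfrac{\sigma^2(x_0)}{2S'(x_0)}\big(\psi''(x_0)\Delta g'(x_0)-\psi'(x_0)\Delta g''(x_0)\big)$, whose sign must be read off from $\Delta g'(x_0)\le0$ and the jump version $\Delta(\mathcal{G}_rg)(x_0)\le0$ of the monotonicity of $\mathcal{G}_rg$; this is painless when $g$ is merely $C^1$ near $x_0$ (then $\Delta g'(x_0)=0$ and $\Delta\Phi(x_0)\ge0$ at once), so the delicate case is a genuine kink of $g'$ in the interior of $(z,b)$, and it is there that the assumptions $g'(x+)\le g'(x-)$ and the monotonicity of $\mathcal{G}_rg$ have to be combined carefully.
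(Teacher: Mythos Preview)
Your approach coincides with the paper's. Both reduce the claim to showing that $g'/\psi'$ is non-increasing on $[y,b)$ (with the kinks handled by $g'(x+)\le g'(x-)$), and in part (B) your $\Phi$ is exactly $-\mathcal{D}$, where the paper sets $\mathcal{D}(x)=(\mathcal{G}_rg)(x)\,\psi'(x)/S'(x)+r(L_\psi g)(x)$. Instead of your differential step $\Phi'=-(L_\psi\mathbbm{1})(\mathcal{G}_rg)'\ge0$, the paper expands $(L_\psi g)(x)$ via \eqref{canonical} as $(L_\psi g)(z+)-\int_z^x(\mathcal{G}_rg)\psi m'$ and then bounds the integral using the monotonicity of $\mathcal{G}_rg$, arriving at $\mathcal{D}(x)\le(\mathcal{G}_rg)(x)\,\psi'(z)/S'(z)+r(L_\psi g)(z+)\le0$. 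This is the same estimate in integrated form, with the advantage that it does not require $(\mathcal{G}_rg)'$ to exist. The paper, however, applies \eqref{canonical} from $z$ to $x$ without isolating the points of $\mathcal{P}$, so the kink obstacle you highlight at the end is simply not confronted in the paper's argument either; you are being more careful than the paper, not less.

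One caveat in (A): your assertion that ``$g$ positive and concave on $[y,b)$ implies $g$ non-decreasing there'' fails on a bounded interval (take $g(x)=2-x$ on $[1,1.9)$), so that route to $g'\ge0$ does not work in general. The paper, for its part, simply asserts that concavity of $g$ and convexity of $\psi$ force $(g'/\psi')'<0$, which likewise tacitly relies on $g'\ge0$; so your treatment of (A) and the paper's are the same argument, and equally terse on this point.
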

\begin{proof}
It is clear from \eqref{eq f1} that the required monotonicity of $\hat{f}$ is met provided that inequality
\begin{align}
\frac{d}{dx}\left(\frac{g'(x)}{\psi'(x)}\right)&<0\label{eq nec f1}
\end{align}
is satisfied for all $x\in [y,b)\setminus \mathcal{P}$ and
\begin{align}
\hat{f}(x+) -\hat{f}(x-) = \frac{g'(x-)-g'(x+)}{\psi'(x)}> 0\label{eq nec f2}
\end{align}
for all $x\in [y,b)\cap \mathcal{P}$. First, if $g$ is concave and $\psi$ is convex on $[y,b)$, then the inequalities \eqref{eq nec f1} and
\eqref{eq nec f2} are satisfied and $g'(x)/\psi'(x)$ is non-increasing on $[y,b)$ as claimed.
Assume now instead that the conditions of part (B) are satisfied. It is clear that since $[y,b)\subset(z,b)$ \eqref{eq nec f2} is satisfied by assumption
for all $x\in [y,b)\cap\mathcal{P}$. On the other hand, standard differentiation shows that for all $x\in(z,b)\setminus \mathcal{P}$
\begin{align*}
\frac{d}{dx}\left(\frac{g'(x)}{\psi'(x)}\right)=\frac{S'(x)}{{\psi'}^2(x)}\left[\frac{g''(x)}{S'(x)}\psi'(x)-\frac{\psi''(x)}{S'(x)}g'(x)\right]
=\frac{2S'(x)\mathcal{D}(x)}{\sigma^2(x){\psi'}^2(x)}.
\end{align*}
where
$$
\mathcal{D}(x)=(\mathcal{G}_rg)(x)\frac{\psi'(x)}{S'(x)}+r(L_\psi g)(x).
$$
The assumed monotonicity and non-positivity of $(\mathcal{G}_rg)(x)$ on $(z,b)\setminus \mathcal{P}$ and identity \eqref{canonical} now implies that
\begin{align*}
\mathcal{D}(x)&=(\mathcal{G}_rg)(x)\frac{\psi'(x)}{S'(x)}-r\int_z^x\psi(v)(\mathcal{G}_rg)(v)m'(v)dv+r(L_\psi g)(z+)\\
&\leq (\mathcal{G}_rg)(x)\frac{\psi'(z)}{S'(z)}+r(L_\psi g)(z+)\leq r(L_\psi g)(z+)
\end{align*}
for all $x\in(z,b)\setminus \mathcal{P}$. However, the assumed monotonicity of $g(x)/\psi(x)$ in a neighborhood of $z$ then guarantees that
$(L_\psi g)(z+) \leq 0$, proving that $\mathcal{D}(x)\leq 0$ for all $x\in(z,b)\setminus \mathcal{P}$.
\end{proof}
Lemma \ref{lemma mon1} states a set of conditions under which the function  $\hat{f}$ characterized by \eqref{eq f1} is non-decreasing on the set $[y,b)$ and, therefore, the function
$\hat{f}(x)\mathbbm{1}_{[y,b)}(x)$ is nondecreasing on $\mathcal{I}$. Interestingly, the first of these conditions is based solely on the concavity of the exercise payoff and the convexity of the increasing fundamental
solution without imposing further requirements. Since the
convexity of the fundamental solution $\psi$ is determined by $\mu$ and $\sigma$, part (A) of Lemma \ref{lemma mon1} essentially delineates circumstances under which the monotonicity
of the representing function $\hat{f}$ could be, in principle,
characterized solely based on the infinitesimal characteristics of the underlying diffusion and the concavity of the exercise payoff.
Part (B) of Lemma \ref{lemma mon1} shows, in turn, how the monotonicity of the  function  $\hat{f}$ is associated with the monotonicity of the generator $\mathcal{G}_rg$.
The conditions of part (B) of Lemma \ref{lemma mon1} are satisfied, for example,  under the assumptions of Remark \ref{sufficient} provided that
$\mathcal{G}_rg$ is non-increasing on $(\tilde{x},b)$ and $z\in (\tilde{x}, y\wedge y^\ast)$.

Moreover, it is clear that under the conditions of Lemma \ref{lemma mon1} we have $J_y(x)=V_y(x)$ for all $x,y\in\mathcal{I}$.
However, without imposing further restrictions on the behavior of the payoff we do not know whether $\hat{f}(x)\mathbbm{1}_{[y,b)}(x)$ generates
the smallest $r$-excessive majorant of the exercise payoff $g$ or not, nor do we know how $\hat{f}(x)\mathbbm{1}_{[y,b)}(x)$ behaves in the neighborhood of the optimal
stopping boundary. Our next theorem summarizes a set of conditions under which these questions can be unambiguously answered.
\begin{theorem}\label{thm optimal condition}
Assume that there is a unique interior threshold $y^\ast=\inf\{x\in \mathcal{I}:\hat{f}(x)>0\}\in \mathcal{I}$, that the conditions (A) or (B) of Lemma \ref{lemma mon1} are satisfied on $[y^\ast,b)$, and that
$g'$ is lower semicontinuous on $[y^\ast,b)$.
Then,  $\hat{f}(y^\ast)=0$ if $y^\ast\in \mathcal{I}\setminus \mathcal{P}$ and
$$
\hat{f}(y^\ast) = g(y^\ast)-\frac{\psi(y^\ast)}{\psi'(y^\ast)}g'(y^\ast+)> 0
$$ if $y^\ast\in \mathcal{P}$. Moreover,
\begin{align}
\hat{f}(x) = \frac{(L_\psi g)(x+)}{(L_\psi \mathbbm{1})(x)} = \frac{(L_\psi g)(y^\ast+)-\int_{y^\ast}^{x}(\mathcal{G}_rg)(v)\psi(v)m'(v)dv}{(L_\psi \mathbbm{1})(x)}\label{esitys1}
\end{align}
for all $x\in (y^\ast,b)\setminus \mathcal{P}$, and
\begin{align}\label{optstopval}
\begin{split}
V(x)&=V_{y^{\ast}}(x)=J_{y^\ast}(x)=\psi(x)\sup_{y\geq x}\left[\frac{g(y)}{\psi(y)}\right]=\psi(x)\frac{g(x\lor y^\ast)}{\psi(x\lor y^\ast)}\\
&=
\E_x\left[\sup_{t\in[0,T]}\hat{f}(X_t)\mathbbm{1}_{[y^\ast,b)}(X_t)\right].
\end{split}
\end{align}
\end{theorem}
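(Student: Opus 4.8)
The plan is to proceed as follows. The common thread is that the hypotheses (A) or (B) of Lemma~\ref{lemma mon1}, applied with $y=y^\ast$ on $[y^\ast,b)$, force $\hat{f}$ to be non-decreasing there, while the defining identity $y^\ast=\inf\{x\in\mathcal I:\hat{f}(x)>0\}$ forces $\hat{f}\le 0$ on $(a,y^\ast)$ and hence, by that monotonicity, $\hat{f}>0$ on $(y^\ast,b)$. Combined with the elementary identity $\frac{d}{dx}\bigl(g(x)/\psi(x)\bigr)=-\frac{\psi'(x)}{\psi^2(x)}\hat{f}(x)$, valid off $\mathcal P$, this shows that $g/\psi$ is non-decreasing on $(a,y^\ast)$ and non-increasing on $(y^\ast,b)$, so $y^\ast=\argmax\{g(x)/\psi(x)\}$; since in addition $g>0$ on a right neighbourhood of $y^\ast$, we also get $y^\ast\in g^{-1}(\mathbb R_+)$, which is what justifies applying Lemma~\ref{lemma mon1} with $y=y^\ast$ in the first place. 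Finally, the assumed lower semicontinuity of $g'$ on $[y^\ast,b)$ makes $\hat{f}$ upper semicontinuous there, so that $\hat{f}\mathbbm 1_{[y^\ast,b)}$ is nonnegative, non-decreasing and upper semicontinuous on $\mathcal I$.

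First I would pin down $\hat{f}$ at $y^\ast$. If $y^\ast\notin\mathcal P$, then $g\in C^1$ near $y^\ast$, so $\hat{f}$ given by \eqref{eq f1} is continuous there; being $\le 0$ just to the left and $>0$ just to the right, it must vanish, i.e.\ $\hat{f}(y^\ast)=0$. If $y^\ast\in\mathcal P$, then $\hat{f}$ jumps at $y^\ast$, and upper semicontinuity of $\hat{f}\mathbbm 1_{[y^\ast,b)}$ (whose left limit at $y^\ast$ is $0$) together with the monotonicity of $\hat{f}$ on $[y^\ast,b)$ forces the value at $y^\ast$ to equal the right limit, $\hat{f}(y^\ast)=\hat{f}(y^\ast+)=g(y^\ast)-\frac{\psi(y^\ast)}{\psi'(y^\ast)}g'(y^\ast+)$; strict positivity is then read off from the sign structure at the corner, where the kink $g'(y^\ast-)>g'(y^\ast+)$ is genuine and $\hat{f}(y^\ast-)\le 0$. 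Next I would derive \eqref{esitys1} from the equivalent form \eqref{eq f1alt}, namely $\hat{f}(x)=(L_\psi g)(x)/(L_\psi\mathbbm 1)(x)$ with $(L_\psi\mathbbm 1)(x)=\psi'(x)/S'(x)>0$, by writing $(L_\psi g)(x)=(L_\psi g)(y^\ast+)-\int_{y^\ast}^{x}(\mathcal G_r g)(v)\psi(v)m'(v)\,dv$ via the canonical identity \eqref{canonical} applied across the finitely many points of $\mathcal P$ lying in $(y^\ast,x)$.

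For the value identity \eqref{optstopval} I would first establish optimality of $\tau_{y^\ast}$. Under hypothesis (B) of Lemma~\ref{lemma mon1}, conditions (ii)--(iii) of Lemma~\ref{Martin1} hold verbatim on $[y^\ast,b)$, so $V=V_{y^\ast}$ follows directly from Lemma~\ref{Martin1}; under hypothesis (A) I would instead run the argument of the proof of Theorem~\ref{meidan yleinen esitys1}: by Theorem~\ref{theorem inc} (using $g(x)/\psi(x)\to 0$ as $x\uparrow b$) one has $J_{y^\ast}=V_{y^\ast}$ and this function is $r$-excessive, while $V_{y^\ast}$ majorizes $g$ on $\mathcal I$ precisely because $y^\ast$ maximizes $g/\psi$, and $V_{y^\ast}$ is attained by $\tau_{y^\ast}$, so again $V=V_{y^\ast}=J_{y^\ast}$. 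By \eqref{eq Vy}, $V_{y^\ast}(x)=\psi(x)\,g(x\lor y^\ast)/\psi(x\lor y^\ast)$, and the shape of $g/\psi$ established above turns this into $\psi(x)\sup_{y\ge x}[g(y)/\psi(y)]$. Finally, since $\hat{f}\mathbbm 1_{[y^\ast,b)}$ is non-decreasing on $\mathcal I$, one has pathwise $\hat{f}(M_T)\mathbbm 1_{[y^\ast,b)}(M_T)=\sup_{t\in[0,T]}\hat{f}(X_t)\mathbbm 1_{[y^\ast,b)}(X_t)$, which yields the last equality in \eqref{optstopval}.

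The step I expect to be most delicate is the behaviour exactly at $y^\ast$: the dichotomy $\hat{f}(y^\ast)=0$ versus $\hat{f}(y^\ast)>0$ is precisely that between the smooth-fit principle holding and failing, and establishing strict positivity in the corner case requires exploiting the genuine discontinuity of $g'$ at $y^\ast$, not merely the inequality $g'(y^\ast+)\le g'(y^\ast-)$. Secondary difficulties are the bookkeeping of the $\mathcal P$-jumps in \eqref{esitys1} and making sure the boundary input $g(x)/\psi(x)\to 0$ as $x\uparrow b$ required by Theorem~\ref{theorem inc} is in force, together with reconciling routes (A) and (B), since hypothesis (A) does not directly deliver $\mathcal G_r g\le 0$ and must be run through the $r$-excessivity argument instead.
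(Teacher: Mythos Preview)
Your proposal is correct and follows essentially the same route as the paper: the paper's proof simply observes that \eqref{esitys1} comes from $\hat{f}(x)=\frac{S'(x)}{\psi'(x)}(L_\psi g)(x)$ together with the canonical identity \eqref{canonical}, and that all remaining claims follow directly from Theorem~\ref{meidan yleinen esitys1}. Your argument is exactly a spelled-out version of that invocation; the one unnecessary detour is your case split between hypotheses (A) and (B) --- the paper applies Theorem~\ref{meidan yleinen esitys1} uniformly, since Lemma~\ref{lemma mon1} delivers the monotonicity of $\hat f$ on $[y^\ast,b)$ under either hypothesis, and that is all Theorem~\ref{meidan yleinen esitys1} needs --- and you are right to flag that the limiting condition $g(x)/\psi(x)\to 0$ as $x\uparrow b$ (and, in Theorem~\ref{meidan yleinen esitys1}, the monotonicity of $g$) is being tacitly assumed.
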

\begin{proof}
Claim \eqref{esitys1} follows from the identity $\hat{f}(x)=\frac{S'(x)}{\psi'(x)}(L_\psi g)(x)$ by invoking the canonical form \eqref{canonical}.
The rest of the claims follow directly from Theorem \ref{meidan yleinen esitys1}.
\end{proof}
Theorem \ref{thm optimal condition} shows that the continuity of the function $\hat{f}$ at the optimal boundary $y^\ast$ coincides with the standard
{\em smooth fit principle} requiring that the value should be continuously differentiable across the optimal boundary. However, as is clear from
Theorem \ref{thm optimal condition}, if the optimal boundary is attained at a threshold where the exercise payoff is not differentiable, then $\hat{f}$
is discontinuous at the optimal boundary $y^\ast$. Furthermore, since the nonnegativity and monotonicity of $\hat{f}(x)\mathbbm{1}_{[y^\ast,b)}(x)$ on $[y^\ast,b)$ are sufficient
for the validity of Theorem \ref{thm optimal condition}, we observe in accordance with the results by \cite{ChSaTa} that $\hat{f}(x)\mathbbm{1}_{[y^\ast,b)}(x)$ is
only upper semicontinuous on $\mathcal{I}$.

Theorem \ref{thm optimal condition} also shows that $\hat{f}(x)$ has a neat integral representation \eqref{esitys1} capturing the size of the potential
discontinuity of $\hat{f}(x)$ at $y^\ast$. In the case where $a$ is unattainable and the smooth fit principle is satisfied at $y^\ast$ \eqref{esitys1}
can be re-expressed as  (cf. Proposition 2.13 in \cite{ChSaTa})
\begin{align}
\hat{f}(x) = -\frac{\int_{y^\ast}^{x}(\mathcal{G}_rg)(v)\psi(v)m'(v)dv}{r\int_{a}^{x}\psi(v)m'(v)dv}\label{esitys1b}
\end{align}
and, hence, in that case the value reads as
\begin{align}
V(x) = -\mathbb{E}_x\left[\frac{\int_{y^\ast}^{M_T}(\mathcal{G}_rg)(v)\psi(v)m'(v)dv}{r\int_{a}^{M_T}\psi(v)m'(v)dv}\mathbbm{1}_{[y^\ast,b)}(M_T)\right]\label{esitys1c}
\end{align}
It is clear that if the sufficient conditions stated in Remark \ref{sufficient} are satisfied, and in addition $(\mathcal{G}_rg)(x)$ is non-increasing on $(y^\ast,b)$,
and $a$ is unattainable for the underlying diffusion, then the conditions of  Theorem \ref{thm optimal condition} are met and
\begin{align*}
V(x) = \mathbb{E}_x\left[\sup_{t\in [0,T]}\left(-\frac{\int_{y^\ast}^{X_t}(\mathcal{G}_rg)(v)\psi(v)m'(v)dv}{r\int_{a}^{X_t}\psi(v)m'(v)dv}\mathbbm{1}_{[y^\ast,b)}(X_t)\right)\right].
\end{align*}

\begin{remark}
Our approach relies on the identity $$\p_x[M_T\geq y]=\p_x[\tau_y<T]=\E_{x}\left[e^{-r\tau_y}\right] = \frac{\psi(x)}{\psi(y)}, \quad a<x < y <b$$
which is essentially based on the continuity of the running supremum process $M_t$. Since the running supremum of a spectrally negative jump-diffusion is continuous as well and a jump-diffusion is a Hunt process, we notice that our principal findings on the representing function are valid for that class of processes as well provided that a set of sufficient regularity conditions are met (cf. \cite{AlMaRa14}). In that setting the increasing fundamental solution $\psi$ can be identified as the $r$-scale function associated with the particular spectrally negative jump diffusion (cf. Theorem 8.1 in \cite{Kyprianou06}).
\end{remark}

\section{Illustrations and Extensions}
We now illustrate our general findings in five separate examples in order to illustrate the applicability of the developed approach as well as the intricacies associated with the considered representation. The first example focuses on a stopping problem arising in the literature on economic mechanism design.
In the second example we reconsider the analysis of an optimal stopping signal originally studied in \cite{BaBa} and connect it to the analysis developed in our manuscript.
The third example focuses, in turn, on a case where the payoff is smooth and the stopping strategy is of
the single boundary type. Despite these favorable properties, we will show that it does not always result into a value characterizable as an expected supremum in the spirit of \eqref{prob2}.
The fourth example, in turn, focuses on a less smooth case resulting into a representation where the representing function is monotone but not everywhere continuous. Finally, the fifth example focuses on spectrally negative jump diffusions and show how the developed approach applies there as well.

\subsection{Incentive Compatible Implementable Stopping Rules}

\cite{KrST15} and \cite{KrSt15b} consider the determination of incentive compatible implementable stopping rules arising in economic studies analyzing mechanism design. One of the key questions within the framework developed in \cite{KrST15} and \cite{KrSt15b} is to investigate if there exits a {\em transfer} which would result into the optimality of a desired exercise strategy characterized by a so-called cut-off rule. Such problems arise quite naturally, for example, in models considering situations where individual exercise strategies do not coincide with a socially desirable exercise rule.  In such cases the decision making problem of a social planner can be reduced into the determination of a transfer rule (for example, a tax) resulting into the individual optimality of the socially desirable state. As we will now demonstrate, the approach developed in this study is particularly appropriate for the analysis of this question within the considered infinite horizon setting. To see that this is indeed the case, assume for simplicity that the exercise payoff $g:\mathcal{I}\mapsto \mathbb{R}$ is continuously differentiable on $\mathcal{I}$. Assume also that the representing function $\hat{f}$ defined for $x\in \mathcal{I}$ by
$\hat{f}(x)=g(x)-g'(x)\psi(x)/\psi'(x)$ is nondecreasing and changes uniquely sign at the interior threshold $y^\ast = \hat{f}^{-1}(0) \in (a, b)$. It is clear from Theorem \ref{thm optimal condition} that in that case $y^\ast=\argmax\{g(x)/\psi(x)\}$, $\tau_{y^\ast}=\inf\{t\geq 0:X_t\geq y^\ast\}$ is an optimal stopping time, and the value reads as in \eqref{optstopval}. Given these observations, we now consider the associated optimal stopping problem
\begin{align}\label{socialplan}
V^{\hat{f}}(x)=\sup_\tau \E_x\left[e^{-r\tau}(g(X_\tau)-\hat{f}(k^\ast))\right].
\end{align}
where $k^\ast\in \mathcal{I}$ is an exogenously set threshold. We now find that our assumptions guarantee the following result.
\begin{proposition}\label{incentives}
Under our assumptions,
$$k^\ast=\argmax\left\{\frac{g(x)-f(k^\ast)}{\psi(x)}\right\},$$ $\tau_{k^\ast}=\inf\{t\geq 0:X_t\geq k^\ast\}$ is an optimal stopping time, and the value reads as
\begin{align}\label{valuesocialplan}
\begin{split}
V^{\hat{f}}(x) &= \E_x\left[e^{-r\tau_{k^\ast}}(g(X_{\tau_{k^\ast}})-\hat{f}(k^\ast))\right]=\psi(x)\;\frac{g(x\lor k^\ast)-\hat{f}(k^\ast)}{\psi(x\lor k^\ast)}\\
&=\E_x\left[\sup_{t\in[0,T]}\left(\hat{f}(X_t\lor k^\ast)-\hat{f}(k^\ast)\right)\right].
\end{split}
\end{align}
\end{proposition}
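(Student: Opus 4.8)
The plan is to recognise \eqref{socialplan} as the optimal stopping problem \eqref{eq prob} for the \emph{shifted} exercise payoff $\tilde g(x):=g(x)-\hat f(k^\ast)$ and then to apply the single-boundary results of Section \ref{sec 1sided} to $\tilde g$. The crucial remark is that $\tilde g-g$ is a constant, so $\tilde g'\equiv g'$ and hence the representing function attached to $\tilde g$ through \eqref{eq f1} is
$$
\tilde f(x)=\tilde g(x)-\frac{\psi(x)}{\psi'(x)}\tilde g'(x)=\hat f(x)-\hat f(k^\ast).
$$
Since $g\in C^1(\mathcal{I})$ the map $\hat f$ is continuous, hence so is $\tilde f$; and since $\hat f$ is nondecreasing, so is $\tilde f$, with $\tilde f\le 0$ on $(a,k^\ast]$ and $\tilde f\ge 0$ on $[k^\ast,b)$, the sign change occurring at $k^\ast$. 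Thus $\tilde g$ is once more a single-boundary payoff of the type treated in Section \ref{sec 1sided}, with $k^\ast$ now playing the role that $y^\ast$ played for $g$.

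With this reduction in hand I would simply re-run the earlier arguments for $\tilde g$. From $\frac{d}{dx}\bigl(\tilde g(x)/\psi(x)\bigr)=-\frac{\psi'(x)}{\psi^2(x)}\tilde f(x)$ and the sign of $\tilde f$, the ratio $\tilde g/\psi$ is nondecreasing on $(a,k^\ast)$ and nonincreasing on $(k^\ast,b)$, so $k^\ast\in\argmax\{(g(x)-\hat f(k^\ast))/\psi(x)\}$, uniquely once one uses that ``$\hat f$ changes uniquely sign'' forces strict monotonicity of $\hat f$ near $k^\ast$. The representation \eqref{eq Vy} written for $\tilde g$ then identifies the value of the threshold rule $\tau_{k^\ast}$ as $\psi(x)\,\tilde g(x\lor k^\ast)/\psi(x\lor k^\ast)$, the middle member of \eqref{valuesocialplan}, and the monotonicity of $\tilde g/\psi$ around $k^\ast$ shows that this function majorises $\tilde g$ on $\mathcal{I}$ (with equality on $[k^\ast,b)$).

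Next I would invoke Theorem \ref{theorem inc} for the payoff $\tilde g$: its hypotheses are inherited, since $\tilde g'=g'$ is lower semicontinuous on $[k^\ast,b)$, the map $\tilde f\,\mathbbm{1}_{[k^\ast,b)}$ is nonnegative, nondecreasing and (being continuous with $\tilde f(k^\ast)=0$) upper semicontinuous on $\mathcal{I}$, and $\tilde g(x)/\psi(x)\to 0$ as $x\uparrow b$ because $g(x)/\psi(x)\to 0$ and $\psi(b-)=\infty$. Theorem \ref{theorem inc} then gives that $\E_x\bigl[\tilde f(M_T)\mathbbm{1}_{[k^\ast,b)}(M_T)\bigr]$ equals the value of $\tau_{k^\ast}$ and is $r$-excessive for $X$; combined with the majorisation above and the attainability of that value by $\tau_{k^\ast}$, the verification argument used in the proof of Theorem \ref{meidan yleinen esitys1} yields that $\tau_{k^\ast}$ is optimal for \eqref{socialplan} and that $V^{\hat f}(x)$ equals $\psi(x)\,\tilde g(x\lor k^\ast)/\psi(x\lor k^\ast)$ --- the first two equalities of \eqref{valuesocialplan}. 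For the third, monotonicity of $\hat f$ gives the pointwise identity $\tilde f(x)\mathbbm{1}_{[k^\ast,b)}(x)=(\hat f(x)-\hat f(k^\ast))^+=\hat f(x\lor k^\ast)-\hat f(k^\ast)$, a nonnegative, nondecreasing, continuous function of $x$; consequently $\sup_{t\in[0,T]}\bigl(\hat f(X_t\lor k^\ast)-\hat f(k^\ast)\bigr)=\hat f(M_T\lor k^\ast)-\hat f(k^\ast)$ and its $\p_x$-expectation is exactly $\E_x[\tilde f(M_T)\mathbbm{1}_{[k^\ast,b)}(M_T)]=V^{\hat f}(x)$.

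The main obstacle is not any single computation but ensuring that the additive shift does not carry $\tilde g$ outside the scope of the Section \ref{sec 1sided} results: one must check that $\tilde g^{-1}(\mathbb{R}_+)$ is still a left-open subinterval with right endpoint $b$, that the integrability condition \eqref{integrability} survives (it does, since $\sup_{t\ge0}e^{-rt}\tilde g(X_t)\le\sup_{t\ge0}e^{-rt}g(X_t)+|\hat f(k^\ast)|$), that the limiting behaviour $g(x)/\psi(x)\to 0$ at $b$ upgrades to $\tilde g(x)/\psi(x)\to 0$ (which requires $\psi$ to be unbounded at $b$, as holds when $b$ is unattainable), and that the sign change of $\tilde f$ genuinely occurs at the single point $k^\ast$ rather than over a flat stretch of $\hat f$. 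All of these are guaranteed by the standing assumptions of this subsection; once they are in place the proof is just the specialisation of Theorems \ref{theorem inc}, \ref{meidan yleinen esitys1} and \ref{thm optimal condition} to the payoff $g-\hat f(k^\ast)$.
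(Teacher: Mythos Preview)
Your proposal is correct and follows essentially the same route as the paper: define the shifted payoff $\tilde g=g-\hat f(k^\ast)$, observe that its representing function is $\tilde f=\hat f-\hat f(k^\ast)$, and then invoke the single-boundary machinery of Section~\ref{sec 1sided} (the paper simply appeals to Theorem~\ref{thm optimal condition}). Your write-up is in fact more careful than the paper's own proof in that you explicitly verify the auxiliary hypotheses---integrability, the limit $\tilde g/\psi\to 0$ at $b$, and the shape of $\tilde g^{-1}(\mathbb{R}_+)$---which the paper takes for granted.
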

\begin{proof}
Consider now the function $f(x):=\hat{f}(x)-\hat{f}(k^\ast)$. It is clear that our assumptions guarantee that $f$ vanishes at $k^\ast$ and is continuous and nondecreasing on $\mathcal{I}$.
On the other hand, the representing function of the exercise payoff $\hat{g}(x):=g(x)-\hat{f}(k^\ast)$ reads as
$$
\hat{g}(x) - \hat{g}'(x)\frac{\psi(x)}{\psi'(x)}= \hat{f}(x)-\hat{f}(k^\ast) = f(x).
$$
The alleged result now follows from Theorem  \ref{thm optimal condition}.
\end{proof}

Proposition \ref{incentives} states a set of conditions under which the representing function $\hat{f}$ can be utilized for characterizing a transfer rule resulting into the optimality of a desired fixed exercise threshold. This result is interesting since it essentially delineates circumstances under which a social planner can shift the individually optimal exercise threshold of a decision maker to a different socially optimal level by simply subtracting (or adding) the value of the representing function at the desired state to the exercise payoff. As we will notice in the following section, this result is closely related
with the literature on optimal stopping signals and Gittins indices as well. A nice comparative static implication of our Lemma \ref{increasedvolatility} is summarized in the following.
\begin{corollary}\label{corinc}
Assume that there is a $\tilde{y}\in [a,b]$ so that $\mu(x)-rx$ is non-increasing on $(a,\tilde{y})$, $\mu(x)\leq 0$ for $x\in(\tilde{y},b)$ and $\lim_{x\downarrow a}\mu(x)\leq 0$ whenever $a$ is attainable for the underlying diffusion. Then increased volatility decreases or leaves unchanged the transfer $\hat{f}(k^\ast)$.
\end{corollary}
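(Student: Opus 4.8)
The plan is to reduce the assertion to the convexity of the increasing fundamental solution $\psi$ and then to invoke Lemma~\ref{increasedvolatility}, just as in the proof of Theorem~\ref{compstatthm}. Write the transfer as $\hat{f}(k^\ast)=g(k^\ast)-g'(k^\ast)\psi(k^\ast)/\psi'(k^\ast)$ and observe that neither $g(k^\ast)$ nor $g'(k^\ast)$ depends on the volatility coefficient, so the only volatility-sensitive quantity is $\psi(k^\ast)/\psi'(k^\ast)$. The first step is to record that $g'(k^\ast)\geq 0$: by the representation of $V^{\hat{f}}$ in Proposition~\ref{incentives} the value is nonnegative (retain the $t=0$ term in the expected supremum and use that $\hat{f}$ is nondecreasing), so $g(k^\ast)-\hat{f}(k^\ast)=V^{\hat{f}}(k^\ast)\geq 0$; since $g(k^\ast)-\hat{f}(k^\ast)=g'(k^\ast)\psi(k^\ast)/\psi'(k^\ast)$ and $\psi,\psi'>0$, this forces $g'(k^\ast)\geq 0$. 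This sign is essential, for if $g'(k^\ast)$ were negative the monotonicity would run the other way. Consequently, once increased volatility is shown to weakly increase $\psi(k^\ast)/\psi'(k^\ast)$, the product $g'(k^\ast)\psi(k^\ast)/\psi'(k^\ast)$ weakly increases and hence $\hat{f}(k^\ast)$ weakly decreases. By Lemma~\ref{increasedvolatility} this monotonicity of $\psi(k^\ast)/\psi'(k^\ast)$ holds provided $\psi$ is convex, so everything reduces to proving convexity of $\psi$ under the stated hypotheses on $\mu$.

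To obtain convexity I would work with $G(x):=r\psi(x)-\mu(x)\psi'(x)=\tfrac12\sigma^2(x)\psi''(x)$, which is nothing but $(\mathcal{G}_r\psi)(x)=0$ rewritten, and show that $G\geq 0$ throughout $\mathcal{I}$. On $(\tilde{y},b)$ this is immediate, since $\mu\leq 0$ together with $\psi,\psi'>0$ and $r>0$ gives $G\geq r\psi>0$. On $(a,\tilde{y})$ I would differentiate $G/S'$; using $(\mathcal{G}_r\psi)=0$ and $S''/S'=-2\mu/\sigma^2$, a short computation gives
\begin{align*}
\frac{d}{dx}\!\left(\frac{G(x)}{S'(x)}\right)=\frac{(r-\mu'(x))\,\psi'(x)}{S'(x)}\geq 0,
\end{align*}
the inequality holding because $\mu(x)-rx$ non-increasing means $\mu'(x)\leq r$ (read in the Lebesgue--Stieltjes sense when $\mu$ is merely continuous), while $\psi',S'>0$. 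Hence $G/S'$ is non-decreasing on $(a,\tilde{y})$, and since $S'>0$ it suffices to show that $\liminf_{x\downarrow a}G(x)/S'(x)\geq 0$; the monotonicity then propagates this to all of $(a,\tilde{y})$, and continuity at $\tilde{y}$ matches it with the bound already obtained on $(\tilde{y},b)$.

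I expect the analysis of this left-endpoint limit to be the main obstacle. Writing $G/S'=r\psi/S'-\mu\,(L_\psi\mathbbm{1})$ with $(L_\psi\mathbbm{1})=\psi'/S'$, I would distinguish cases by the behaviour of $X$ at $a$. If $a$ is attainable (regular-killing or exit), then $\mu(a+)\leq 0$ by hypothesis, the boundary values of $\psi,\psi'$ and $S'$ at $a$ are under control, and one gets $G(a+)=r\psi(a+)-\mu(a+)\psi'(a+)\geq 0$, hence $G(a+)/S'(a+)\geq 0$. If $a$ is natural or entrance, one has to invoke the Feller integral criteria characterising these boundary types to control the indeterminate behaviour of $r\psi(x)/S'(x)$ and of $\mu(x)\psi'(x)/S'(x)$ as $x\downarrow a$, in particular to show that any growth of $\mu$ near $a$ is dominated there by the decay of $\psi'/S'$, and thereby conclude that $\liminf_{x\downarrow a}G(x)/S'(x)\geq 0$ (typically this limit is in fact $0$). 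Carrying out this case distinction cleanly is the delicate part of the proof; the rest is routine differentiation together with a direct appeal to Lemma~\ref{increasedvolatility}. Finally, since the hypotheses constrain only $\mu$ and $r$, they persist under an increase of $\sigma$, so the convexity argument applies simultaneously to the original and to the more volatile diffusion, which is exactly what Lemma~\ref{increasedvolatility} requires.
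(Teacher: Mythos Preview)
Your overall strategy coincides with the paper's: reduce the claim to convexity of $\psi$ and then invoke Lemma~\ref{increasedvolatility}. The paper's proof is a single sentence that outsources the convexity of $\psi$ under the stated drift hypotheses to Lemma~3.3 of \cite{Alvarez04}; you instead attempt to reprove that convexity from scratch via $G(x)=\tfrac12\sigma^2(x)\psi''(x)=r\psi(x)-\mu(x)\psi'(x)$ and the monotonicity of $G/S'$. Your computation $(G/S')'=(r-\mu')\psi'/S'$ is correct and handles the interior and the region $(\tilde{y},b)$ cleanly; the left-endpoint analysis you flag as ``the delicate part'' and leave as a sketch is exactly the content of the cited lemma, so you are not missing an idea, merely declining to reproduce a known boundary-classification argument. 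One point where your write-up actually adds to the paper's terse proof: you explicitly justify $g'(k^\ast)\geq 0$ from $V^{\hat f}(k^\ast)=g'(k^\ast)\psi(k^\ast)/\psi'(k^\ast)\geq 0$, which is needed to conclude that an increase in $\psi/\psi'$ translates into a decrease of $\hat f(k^\ast)$, whereas the paper's proof simply asserts the conclusion as a ``direct consequence'' of Lemma~\ref{increasedvolatility} without isolating this sign condition (Theorem~\ref{compstatthm} covers it only under the blanket assumption that $g$ is nondecreasing, which Section~4.1 does not impose).
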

\begin{proof}
The alleged result is a direct consequence of Lemma \ref{increasedvolatility} after noticing that the assumptions guarantee that $\psi$ is strictly convex on $\mathcal{I}$ (see Lemma 3.3 in \cite{Alvarez04}).
\end{proof}

\subsection{Optimal Stopping Signals}

We now proceed in our illustrations and show how the developed representation approach is related with non-standard stopping problems arising in the analysis of Gittins indices and optimal stopping signals (cf., for example, \cite{BaBa}, \cite{BaFo}, \cite{ElKaFo}, \cite{ElKarKar94}, \cite{KasMan1998}, and \cite{Mandelbaum87}). In line with the notation in \cite{BaBa}, we assume that $k\in \mathbb{R}$ is an exogenously given parameter and let
\begin{align}
V_k(x):= \sup_{\tau}\E_x\left[e^{-r\tau}(g(X_\tau)-k)\right]\label{babaprob}
\end{align}
denote the value of the considered optimal stopping problem and assume that the exercise payoff is continuously differentiable on $\mathcal{I}$. As in \cite{BaBa} we also assume that the boundaries are natural for the underlying diffusion $X$. This guarantees that even though the process may tend towards a boundary, it will never attain it in finite time.  In connection with problem \eqref{babaprob}, we also consider the associated non-standard stopping problem
\begin{align}\label{gittins}
\gamma(x) = \inf_{\tau \in \mathcal{T}}\frac{\E_x[g(x)-e^{-r\tau}g(X_\tau)]}{1-\E_x[e^{-r\tau}]},
\end{align}
where $\mathcal{T}$ denotes the class of firs exit times from open subsets of $\mathcal{I}$ with compact closure in $\mathcal{I}$. As was established in \cite{BaBa}, the stopping region $\Gamma_k=\{x\in \mathcal{I}:V_k(x)=g(x)-k\}$ for the problem \eqref{babaprob} coincides with the set $\{x\in \mathcal{I}:\gamma(x)\geq k\}$. We now plan to show how these results can be replicated by utilizing the representation result developed in our paper.

It is clear from our analysis that in the present case it is sufficient that for $k\in \mathbb{R}$ the representing function
\begin{align}
\hat{f}_k(x)= g(x) - k - \psi(x)\frac{g'(x)}{\psi'(x)} = \hat{f}_0(x) - k\label{repgitt}
\end{align}
is nondecreasing and changes uniquely sign at the interior threshold $y^\ast_k = \hat{f}^{-1}_k(0) \in (a, b)$ satisfying equation $\hat{f}_0(y^\ast_k) = k$.
If that is the case, then
\begin{align*}
V_k(x)= \E_x\left[\sup_{t\in[0, T]}\hat{f}_k(X_t\lor y^\ast_k)\right] = \E_x\left[\sup_{t\in[0, T]}\left(\hat{f}_0(X_t\lor y^\ast_k)-k\right)\right]
\end{align*}
and $\Gamma_k=\{x\in \mathcal{I}: \hat{f}_0(x)\geq k\}$. As was established in Theorem 13 of \cite{BaBa}, $\gamma(x)=\hat{f}_0(x)$ in the present single boundary setting (see also Section 3.10 in \cite{ElKarKar94} for the decreasing case). Consequently, we notice that the considered supremum representation results in the correct expressions for the considered functionals. Moreover, given the identity $\gamma(x)=\hat{f}_0(x)$ we observe the following interesting comparative static property of the the value \eqref{gittins}:
\begin{corollary}
Assume that the condition of Corollary \ref{corinc} are satisfied. Then increased volatility increases the value \eqref{gittins}.
\end{corollary}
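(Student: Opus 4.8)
The plan is to combine the identity $\gamma(x)=\hat{f}_0(x)$, already recorded in the excerpt as Theorem 13 of \cite{BaBa}, with the comparative static result of Theorem \ref{compstatthm}. By that identity, the value \eqref{gittins} is nothing but the representing function $\hat{f}_0$ associated with the exercise payoff $g$, so it suffices to show that increased volatility increases or leaves unchanged $\hat{f}_0$. Note that this is the \emph{opposite} monotonicity direction to the first claim of Theorem \ref{compstatthm}; the point is that here the ordering of the underlying differs because we are varying the payoff-free signal rather than a genuine reward, and the sign flips because we are in the ``decreasing'' formulation. Concretely, under the hypotheses of Corollary \ref{corinc} the fundamental solution $\psi$ is strictly convex on $\mathcal{I}$ (Lemma 3.3 in \cite{Alvarez04}), so Lemma \ref{increasedvolatility} applies and increased volatility decreases the logarithmic growth rate $\psi'/\psi$.

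First I would recall that $\hat{f}_0(x) = g(x) - \psi(x) g'(x)/\psi'(x) = g(x) - g'(x)\big/(\psi'(x)/\psi(x))$. Since the standing assumption of this subsection (shared with \cite{BaBa}) is that $g$ is such that the representing function is nondecreasing and changes sign once, and in the Gittins/stopping-signal setting the relevant monotonicity of $g$ is that it is \emph{nondecreasing}, the term $g'(x)$ is nonnegative. Decreasing the positive quantity $\psi'(x)/\psi(x)$ therefore increases the ratio $g'(x)\big/(\psi'(x)/\psi(x))$, hence \emph{decreases} $\hat{f}_0$. This gives the wrong sign, so I would instead argue directly: increased volatility increases $\E_x[f(M_T)]$ for nondecreasing $f\in\mathcal{L}_r^1(\mathcal{I})$ by the second claim of Theorem \ref{compstatthm}, and $\gamma$ itself admits the representation $\gamma(x)=\E_x[\sup_{t\le T}\hat{f}_0(X_t\lor y^\ast_k)]$-type expression only after fixing $k$; the cleaner route is to use the infimum form \eqref{gittins} and a coupling/time-change comparison of the two diffusions.

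So the honest plan is: (i) invoke $\gamma=\hat{f}_0$ to reduce to a statement about $\hat{f}_0$; (ii) write $\hat{f}_0$ via the canonical identity \eqref{eq f1alt}--\eqref{canonical} as $\hat{f}_0(x) = (L_\psi g)(x)/(L_\psi\mathbbm{1})(x)$, and then as a ratio of speed-measure integrals of $\psi$-weighted quantities as in \eqref{esitys1b}; (iii) apply Lemma \ref{increasedvolatility} to control how increased volatility moves $\psi$, $\psi'$ and the ratio $\psi(x)/\psi(z)$, together with the hypotheses of Corollary \ref{corinc} ensuring $\psi$ is strictly convex; (iv) conclude that $\hat{f}_0$, hence $\gamma$, increases. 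In carrying out (iii) one must be careful that increasing volatility changes both the numerator and denominator integrands in \eqref{esitys1b}, so the monotonicity is not termwise obvious — this is where Lemma \ref{increasedvolatility}'s statement that $\psi'(x)/\psi(x)$ decreases, i.e.\ that $\psi$ becomes ``more convex'' in the relevant sense, does the work.

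The main obstacle I anticipate is pinning down the correct sign and the correct stochastic ordering of the two diffusions being compared. ``Increased volatility'' must be made precise (e.g.\ a pointwise increase of $\sigma^2$ with the drift held fixed, or in the sense used for Lemma \ref{increasedvolatility}), and one must verify that the hypotheses of Corollary \ref{corinc} are exactly what is needed to transfer convexity of $\psi$ across the volatility change and to keep $\hat{f}_0$ in the class where the monotone ordering of $\E_x[f(M_T)]$ from Theorem \ref{compstatthm} can be applied. Once the volatility ordering is fixed and convexity of $\psi$ is available from \cite{Alvarez04}, the remainder is a direct application of Lemma \ref{increasedvolatility} and the established identity $\gamma=\hat{f}_0$, with no further analytic difficulty.
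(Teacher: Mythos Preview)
The paper's proof is a single line: ``Analogous with the proof of Corollary~\ref{corinc}.'' Unpacked, this means: the hypotheses of Corollary~\ref{corinc} guarantee (via Lemma~3.3 of \cite{Alvarez04}) that $\psi$ is strictly convex; Lemma~\ref{increasedvolatility} then applies; and since the text has just established the identity $\gamma(x)=\hat{f}_0(x)$, the conclusion for $\gamma$ is read off directly from the first claim of Theorem~\ref{compstatthm}. That is the entire argument---no integral representations, no coupling, no analysis of \eqref{esitys1b}.

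Your plan starts along exactly these lines in step~(i), and that is already the whole proof in the paper. The elaborate route you then propose in (ii)--(iv) (rewriting $\hat{f}_0$ via \eqref{eq f1alt}--\eqref{canonical}, controlling numerator and denominator integrals separately, invoking a time-change comparison of the two diffusions) is unnecessary overkill relative to the paper, and the second claim of Theorem~\ref{compstatthm} about $\E_x[f(M_T)]$ plays no role here. More importantly, you explicitly derive that under the standing assumption ``$g$ nondecreasing'' the argument yields a \emph{decrease} of $\hat{f}_0$, then abandon that computation and look for a different mechanism to reverse the sign. There is no such mechanism in the paper: the proof offered is literally the same as that of Corollary~\ref{corinc}, and Corollary~\ref{corinc} concludes a \emph{decrease} of $\hat{f}(k^\ast)$. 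Your sign observation is therefore correct and points to a misprint in the stated direction of the corollary, not to a hidden additional argument. The intended content is the direct transfer of Corollary~\ref{corinc} through the identity $\gamma=\hat{f}_0$; you should present exactly that, and flag the sign.
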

\begin{proof}
Analogous with the proof of Corollary \ref{corinc}.
\end{proof}

We would like to point out at that the determination of incentive compatible implementable stopping rules considered in the previous subsection is closely associated with the present case as well. To see that this is indeed the case, we immediately notice that if the representing function $\hat{f}_0$ is monotonically increasing then choosing $k=\hat{f}_0(z^\ast)$ for some fixed threshold $z^\ast\in \mathcal{I}$ implies that $\Gamma_{\hat{f}_0(z^\ast)}=\{x\in \mathcal{I}:\gamma(x)\geq \hat{f}_0(z^\ast)\}=\{x\in \mathcal{I}:\hat{f}_0(x)\geq \hat{f}_0(z^\ast)\}=[z^\ast,b)$.

Finally, it is also worth noticing that the non-standard stopping problem \eqref{gittins} has in many cases an interesting interpretation as an appropriate maximal conditional expectation. To see that this is indeed the case, denote by $\Theta$ the set of functions $g:\mathcal{I}\mapsto \mathbb{R}$ belonging into the domain of the extended operator of the underlying process $X$ killed at $T$ and satisfying  for $\tau\in \mathcal{T}$ the generalized Dynkin formula (see, for example, \cite{ChSaTa}, \cite{CrMo14},\cite{HeSt10}, and \cite{LaZe13})
\begin{align}
\E_x\left[e^{-r\tau}g(X_\tau)\right]=g(x)+\E_x\left[\int_0^\tau e^{-rs}\tilde{g}(X_s)ds\right],\label{Dynkin}
\end{align}
where $\tilde{g}\in \mathcal{L}_r^1(\mathcal{I})$ naturally coincides with the generator $(\mathcal{G}_rg)(x)$ whenever the payoff is sufficiently smooth. It is now clear that in this case \eqref{gittins} can be re-expressed as
\begin{align}\label{gittinsgen}
\gamma(x) = -\sup_{\tau \in \mathcal{T}}\frac{\E_x\int_0^\tau e^{-rs}\tilde{g}(X_s)ds}{r\E_x\int_0^\tau e^{-rs}ds}=-\sup_{\tau \in \mathcal{T}}\frac{1}{r}\E_x\left[\tilde{g}(X_T)|T<\tau\right].
\end{align}
Especially, if the exercise payoff constitutes an expected cumulative present value of a flow and reads as $g(x)=(R_r\pi)(x)$ for some continuous $\pi\in \mathcal{L}_r^1(\mathcal{I})$ then $g\in \Theta$ and the non-standard stopping problem \eqref{gittins} can be re-expressed in a more familiar form as
\begin{align*}
\gamma(x) = \sup_{\tau \in \mathcal{T}}\frac{\E_x\int_0^\tau e^{-rs}\pi(X_s)ds}{r\E_x\int_0^\tau e^{-rs}ds}
\end{align*}
implying along the lines of our Lemma \ref{jointprobabilitydist} that
\begin{align}\label{gittinsmod}
\gamma(x)= \sup_{\tau \in \mathcal{T}}\frac{1}{r}\E_x\left[\pi(X_T)|T<\tau\right] = \hat{f}_0(x).
\end{align}
Consequently, we notice that in this case the representing function can be interpreted as the maximal expected present value of the cash flow at the independent exponential terminal date provided that the process is still alive at that instant.

\subsection{Optimal Entry}
In order to illustrate circumstances where the value of a single boundary problems cannot necessarily be expressed as an expected supremum, we now assume that the upper boundary $b$ is unattainable for $X$ and that the exercise payoff can be expressed as an expected cumulative present value
$g(x) = (R_r\pi)(x)$ for some continuous revenue flow $\pi\in \mathcal{L}_r^1(\mathcal{I})$ satisfying the conditions $\pi(x)\gtreqqless 0$ for $x \gtreqqless x_0$, where $x_0\in (a,b)$,
$\lim_{x\downarrow a}\pi(x)< -\varepsilon$ and
$\lim_{x\uparrow b}\pi(x) > \varepsilon$ for some $\varepsilon > 0$. This type of models arise frequently in studies considering optimal entry under uncertainty.

It is clear that under these conditions the exercise payoff satisfies the conditions $g\in C^2(\mathcal{I})$ and
$(\mathcal{G}_rg)(x)= -\pi(x)\lesseqqgtr 0$ for $x \gtreqqless x_0$. Moreover, utilizing representation \eqref{Green} shows that
in the present case
\begin{align*}
(L_\psi g)(x)=\int_a^x\psi(t)\pi(t)m'(t)dt
\end{align*}
Our assumptions guarantee that $(L_\psi g)(x)<0$ for all $x\leq x_0$ and that $(L_\psi g)(x)$ is monotonically increasing on $(x_0,b)$. Fix $x_1>x_0$. Then a
standard application of the mean value theorem for definite integrals
yields
\begin{align*}
(L_\psi g)(x)&=(L_\psi g)(x_1)+\int_{x_1}^x\psi(t)\pi(t)m'(t)dt \\
&= (L_\psi g)(x_1)+\frac{\pi(\xi)}{r}\left[\frac{\psi'(x)}{S'(x)}-\frac{\psi'(x_1)}{S'(x_1)}\right],
\end{align*}
where $\xi\in(x_1,x)$. Letting $x\rightarrow b$ and noticing that $\psi'(x)/S'(x)\rightarrow \infty$ as $x\rightarrow b$ (since $b$ was assumed to be unattainable
for $X$, cf. p. 19 in \cite{BorSal15}) then shows that
$\lim_{x\uparrow b}(L_\psi g)(x)=\infty$ proving that equation $(L_\psi g)(x)=0$ has a unique root $y^\ast\in(x_0,b)$ and that $y^\ast = \argmax\{(R_r\pi)(x)/\psi(x)\}$. Moreover, the value
\eqref{eq prob} can be expressed as
$$
V(x) = \psi(x)\frac{(R_r\pi)(x\lor y^\ast)}{\psi(x\lor y^\ast)} = \begin{cases}
(R_r\pi)(x) &x\geq y^\ast\\
\frac{(R_r\pi)(y^\ast)}{\psi(y^\ast)}\psi(x) &x<y^\ast.
\end{cases}
$$

The representing function $\hat{f}(x)$ characterized in Theorem \ref{theorem inc} can be expressed in the present setting as
$$
\hat{f}(x)=\frac{S'(x)}{\psi'(x)}\int_{a}^x\psi(y)\pi(y)m'(y)dy.
$$
As was established in Theorem \ref{thm optimal condition}, we have that $\hat{f}(y^\ast)=0$ and
$$
V(x)=\E_x\left[\frac{S'(M_T)}{\psi'(M_T)}\int_{y^{\ast}}^{M_T\lor y^\ast}\psi(y)\pi(y)m'(y)dy\right].
$$
Moreover,
standard differentiation shows that for all $x\in(y^\ast,b)$ we have
$$
\hat{f}'(x) = \frac{2S'(x)\psi(x)}{{\psi'}^2(x)\sigma^2(x)}\left[\pi(x)\frac{\psi'(x)}{S'(x)}-r\int_{y^\ast}^{x}\psi(t)\pi(t)m'(t)dt\right]
$$
demonstrating that $\hat{f}$ is nondecreasing for $x\in(y^\ast,b)$ only if
$$
\pi(x)\frac{\psi'(x)}{S'(x)}\geq r\int_{y^\ast}^{x}\psi(t)\pi(t)m'(t)dt
$$
for all $x\geq y^\ast$. Otherwise it is clear from our results that the value of the considered optimal stopping problem {\em cannot} be expressed as an expected supremum
of the form \eqref{prob2} (see Figure \ref{fig example 1}(a)).
A simple sufficient condition guaranteeing the required monotonicity is to assume that $\pi(x)$ is nondecreasing on $(x_0,b)$ since in that case we have
\begin{align*}
\hat{f}'(x)&\geq\frac{2S'(x)\psi(x)}{{\psi'}^2(x)\sigma^2(x)}\left[\pi(x)\frac{\psi'(x)}{S'(x)}-r\pi(x)\int_{y^\ast}^{x}\psi(t)m'(t)dt\right]\\
&\geq\frac{2S'(x)\psi(x)}{{\psi'}^2(x)\sigma^2(x)}\pi(x)\frac{\psi'(y^\ast)}{S'(y^\ast)}\geq 0.
\end{align*}
If this is indeed the case, then
$$V(x) = \mathbb{E}_x\left[\sup_{t\in[0,T]}\frac{S'(X_t\lor y^\ast)}{\psi'(X_t\lor y^\ast)}\int_{y^\ast}^{X_t\lor y^\ast}\psi(v)\pi(v)m'(v)dv\right].$$

\subsection{Capped Call Option}
In order to illustrate our findings in a nondifferentiable setting, assume now that the upper boundary $b$ is unattainable for $X$ and that the exercise
payoff $g(x) = \min((x-K)^+, C)$, with $a < K < K+C < b$, satisfies the limiting inequality
\begin{align}
\lim_{x\downarrow a}\frac{|x-K|}{\varphi(x)}<\infty.\label{limit}
\end{align}
Assume also that the
appreciation rate $\theta(x)=\mu(x)-r(x-K)$ satisfies the conditions $\theta\in\mathcal{L}^1_r(\mathcal{I})$, $\theta(x)\gtreqqless 0$ for $x\lesseqqgtr x_0^\theta$,
where $x_0^\theta\in \mathcal{I}$,
and $\lim_{x\rightarrow b}\theta(x) < -\varepsilon$ for $\varepsilon >0$.

We notice that the exercise payoff $g$ is continuous, nondecreasing, and twice continuously differentiable on $\mathcal{I}\backslash\{K,K+C\}$. Moreover,
$g'(K-)\leq g'(K+)$, $\lim_{x\rightarrow (K+C) -}g'(x)\geq \lim_{x\rightarrow (K+C) +}g'(x)$, and
$$
(\mathcal{G}_rg)(x)=\begin{cases}
-rC, &x\in(K+C,b)\\
\theta(x), &x\in(K,K+C)\\
0,&\in(a,K).
\end{cases}
$$
It is now clear that the conditions of Remark \ref{sufficient} are satisfied. Thus, we known that there exists a unique optimal exercise threshold
$x^\ast =\argmax\{g(x)/\psi(x)\}$ and $V(x)=V_{x^\ast}(x)$. Our objective is now to prove that
this threshold reads as
$x^\ast = \min(C+K,y^\ast)$,
where $y^\ast > x_0^\theta$ is the unique root of the ordinary first order condition
$$
\psi(y^\ast)=\psi'(y^\ast)(y^\ast-K).
$$
To see that this is indeed the case, we first observe by applying part (A) of Corollary 3.2 in \cite{Al04} combined with the limiting condition \eqref{limit} that
$$
\frac{\psi^2(x)}{S'(x)}\frac{d}{dx}\left[\frac{x-K}{\psi(x)}\right] =\frac{\psi(x)}{S'(x)}-(x-K)\frac{\psi'(x)}{S'(x)}=\int_a^x\psi(t)\theta(t)m'(t)dt-\frac{a-K}{\varphi(a)}.
$$
Applying analogous arguments with the ones in Example 3, we find that equation
$$
\int_a^x\psi(t)\theta(t)m'(t)dt-\frac{a-K}{\varphi(a)}=0
$$
has a unique root $y^\ast\in (x_0^\theta,b)$ so that $y^\ast=\argmax\{(x-K)/\psi(x)\}$. Moreover,
$$
U(x)=\sup_{\tau}\mathbb{E}_x\left[e^{-r\tau}(X_\tau-K)^+\right]=\begin{cases}
x-K &x\geq y^\ast\\
(y^\ast-K)\frac{\psi(x)}{\psi(y^\ast)} &x<y^\ast.
\end{cases}
$$

In light of these observations, we find that if $y^\ast\in (K,K+C)$, then it is sufficient to notice that $V_{x^\ast}(x)=\min(C,U(x))$ is $r$-excessive since
constants are $r$-excessive and $U(x)$ is also $r$-excessive. Moreover, since
both $C$ and $U(x)$ dominate the payoff, we notice that $V_{x^\ast}(x)=\min(C,U(x))$ constitutes the smallest $r$-excessive majorant of $g(x)$ and, therefore, $V(x)=V_{x^\ast}(x)=\min(C,U(x))$.
If instead $y^\ast\geq K+C$, then $x^\ast=K+C=\argmax\{g(x)/\psi(x)\}$ and the optimal policy is to follow the stopping policy $\tau_{x^\ast}=\inf\{t\geq 0: X_t\geq K+C\}$ with a value
$$
\tilde{U}(x)=C\mathbb{E}_x\left[e^{-r\tau_{x^\ast}}\right]=\begin{cases}
C &x\geq C+K\\
C\frac{\psi(x)}{\psi(C+K)} &x<C+K.
\end{cases}
$$
Given these findings, we notice that if $y^\ast\geq K+C$, then $x^\ast=K+C$ and
$$
f(x) = C\mathbbm{1}_{[x^\ast,b)}(x)\geq 0
$$
is nonnegative and nondecreasing and, consequently,
$$
V(x)=C\mathbb{E}_x\left[\mathbbm{1}_{[x^\ast,b)}(M_T)\right]=C\mathbb{P}_x\left[M_T\geq K+C\right].
$$
However, since $f(x^\ast-)=0$ and $f(x^\ast+)=C$ we notice that $f$ is discontinuous at the optimal threshold $x^\ast$ (see Figure \ref{fig example 1}(b)).
If $y^\ast<K+C$, then the nonnegative function
$$
f(x)=\begin{cases}
C &x\geq C+K\\
x-K-\frac{\psi(x)}{\psi'(x)} &x\in[y^\ast,K+C)
\end{cases}
$$
in nondecreasing only if the increasing fundamental solution is convex on $(y^\ast,K+C)$ (it has to be locally convex at $y^\ast$). If the convexity requirement is met, then
$$
V(x)=\mathbb{E}_x\left[\left(M_T-K-\frac{\psi(M_T)}{\psi'(M_T)}\right)\mathbbm{1}_{[y^\ast,C+K)}(M_T)\right]+C\mathbb{P}_x\left[M_T\geq C+K\right].
$$
Moreover, since
$f(C+K+)=C > C-\frac{\psi(C+K-)}{\psi'(C+K-)} = f(C+K-)$, we notice that $f$ is discontinuous at $C+K$.

\begin{figure}[!ht]
\begin{center}
\begin{subfigure}[b]{0.4\textwidth}
\begin{center}
\includegraphics[width=\textwidth]{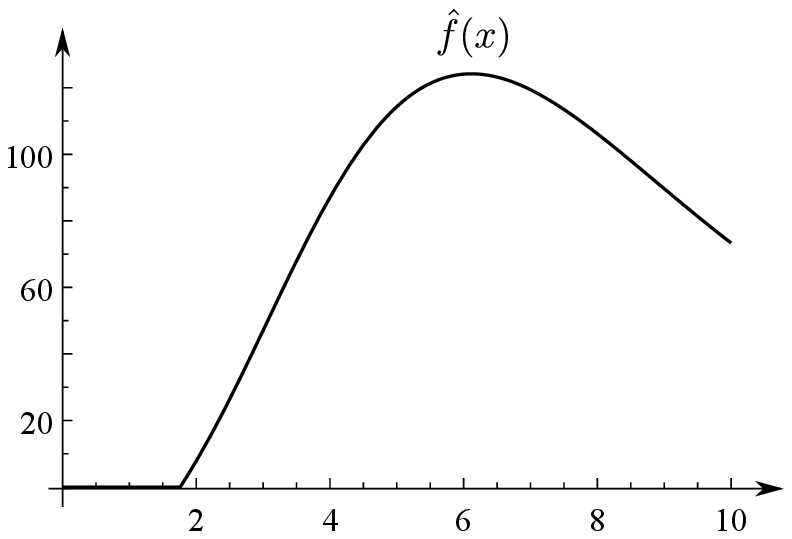}
\end{center}
\caption{\tiny Example 1: Smooth payoff with $\pi(x)=(x^5-2)e^{-x}+1$ leads to a non-increasing $\hat{f}$. In this case the representation as an expected supremum fails to exist.}\label{fig example 1a}
\end{subfigure}
\qquad
\begin{subfigure}[b]{0.4\textwidth}
\begin{center}
\includegraphics[width=\textwidth]{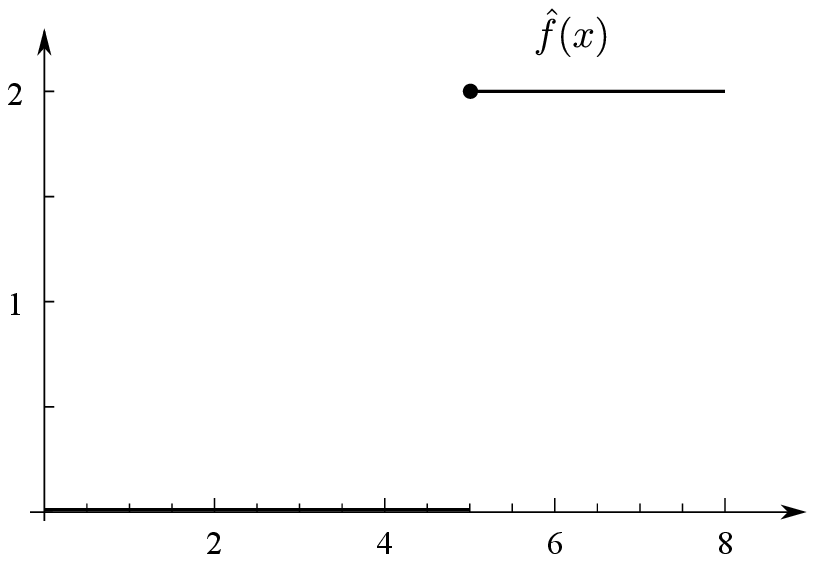}
\end{center}
\caption{\tiny Example 2: Capped call option with $g(x)=\min\{(x-3)^+,2\}$ leads to a discontinuous $\hat{f}$.\\ }\label{fig example 1b}
\end{subfigure}
\end{center}
\caption{\small Numerical examples based on geometric Brownian motion. Parameters have been chosen such that $\psi=x^2$ and $\varphi=x^{-4}$}\label{fig example 1}
\end{figure}

\subsection{Spectrally Negative Jump-diffusions}
In order to illustrate our findings for a spectrally negative jump diffusion, consider now the geometric Lévy process $X=\{X_{t}\}$ with a finite Lévy measure
$\nu=\lambda \mathfrak{m}$, where $\mathfrak{m}$ denotes the jump size distribution,  characterized by the dynamics
\begin{align*}
d X_{t}=X_{t-}\left\{\mu dt+\sigma d W_{t}+\lambda
\int_{(0,1)} z \tilde{N}(dt,dz) \right\},\quad X_0:=x\in \mathbb{R}_+
\end{align*}
where $\mu\in \mathbb{R}$ and $\sigma>0$. It is now a straightforward exercise to show that $\psi(x)=x^{\varrho}$, where
$\varrho>0$ denotes the positive root of the characteristic equation
\begin{align*}
&&\frac{1}{2} \sigma^{2} \varrho(\varrho-1)+(\mu+\lambda
\bar{m})\varrho-(r+\lambda)+\lambda \int_{0}^{1}(1-z)^\varrho
\mathfrak{m}(dz)=0,
\end{align*}
and
$$
\bar{m}=\int_0^1z\mathfrak{m}(dz)
$$
denotes the expected jump size. We observe that if $$\hat{f}(x)=g(x)-\frac{1}{\varrho}g'(x)x$$ is nondecreasing, there is an interior point $y^\ast=\inf\{x\geq0:\hat{f}(x)\geq 0\}\in (0,\infty)$, and $g'$ is lower semicontinuous on $[y^\ast,\infty)$, then
$$
V(x)=\E_x\left[\sup\{\hat{f}(X_t)\mathbbm{1}_{[y^\ast,\infty)}(X_t);t\leq T\}\right]=\begin{cases}
g(x), &x\in [y^\ast,\infty),\\
g(y^\ast)\left(\frac{x}{y^\ast}\right)^{\varrho}, &x\in(0,y^\ast).
\end{cases}
$$
It is at this point worth emphasizing that in this jump-diffusion setting verifying optimality by investigating the behavior of the representing function $\hat{f}$ is easier than by investigating the behavior of the generator of the underlying process.

\section{Conclusions}

We considered the representation of the value of a class of optimal stopping problems of linear diffusions as the expected supremum of a function with known
regularity and monotonicity properties. By focusing on the single exercise boundary case, we developed an explicit integral representation for the above mentioned
function by first computing the probability distribution of the running supremum
of the underlying diffusion and then utilizing this distribution in determining the expected value explicitly in terms of the increasing minimal excessive
mapping and the infinitesimal characteristics of the diffusion.

There are at least three directions towards which our analysis could be potentially extended. First, the present approach focuses on single boundary problems and consequently overlooks
general problems with more boundaries. Extending our analysis towards this setting and computing the representing function explicitly would, therefore, constitute a natural extension of our approach.
Second, impulse control and optimal switching problems can in
many diffusion cases be interpreted as sequential stopping problems of the underlying process. Thus, extending our representation to that setting would be interesting too
(for a recent approach to this within impulse control, see \cite{ChSa15}). However, given the potential
discreteness of the optimal policy in the impulse control policy setting seems to make the explicit determination of the integral representation a very
challenging problem which at the moment is outside the scope of our study.\\

\noindent{\bf Acknowledgements:} We would like to thank an {\em anonymous Associate Editor}, {\em Peter Bank}, {\em Paavo Salminen}, and the participants of the workshop {\em Strategic Aspects of Optimal Stopping and Control in Economics and Finance} at {\em ZIF} (Bielefeld University) as well as the participants of the conference {\em Optimization of the flow of dividends: 20 years after} at the Palais Brongniart (Paris) for valuable suggestions and helpful comments.\\

\bibliographystyle{amsplain}
\bibliography{Maks}

\appendix

\section{Proof of Lemma \ref{jointprobabilitydist}}\label{app2}
\begin{proof}
(A) Assume that $h:\mathcal{I}\mapsto \mathbb{R}$ is such that $h\in \mathcal{L}_r^1(\mathcal{I})$. Noticing that
\begin{align*}
\p_x[X_T\in dv|M_T\leq y]&=\frac{\p_x[X_T\in dv;M_T\leq y]}{\p_x[M_T\leq y]}=\frac{\p_x[X_T\in dv;T < \tau_y]}{\p_x[T<\tau_y]}
\end{align*}
demonstrates that
$$
\frac{1}{r}\E_x[h(X_T)|M_T\leq y] = \frac{\E_x\int_0^{\tau_y}e^{-rs}h(X_s)ds}{1-\E_x[e^{-r\tau_y}]}.
$$
Invoking the strong Markov property and utilizing the known form of the Laplace transform of the first hitting time $\tau_y$ (p. 18 on \cite{BorSal15})
yields \eqref{condexpb}. On the other hand,
combining the joint probability density  $\p_x[X_T\in dv, M_T \in dy]$ of $X$ and $M$ stated on p. 26 of \cite{BorSal15}
with the density $\p_x[M_T\in dy]=(\psi'(y)\psi(x)/\psi^2(y))dy$ yields
\begin{align*}
\p_x[X_T\in dv|M_T=y] &=r\frac{S'(y)}{\psi'(y)}\psi(v)m'(v)dv,\quad x\in(a,y].
\end{align*}
The proposed expectation \eqref{condexp} then follows by standard integration. Letting $x\uparrow y$ in \eqref{condexpb}
and invoking L'Hospital's rule yields
$$
\lim_{x\uparrow y}\frac{1}{r}\E_x[h(X_T)|M_T\leq y] = (R_rh)(y)-(R_rh)'(y)\frac{\psi(y)}{\psi'(y)}.
$$
Applying now the representation \eqref{Green} yields the proposed identity \eqref{condexpc}, thus completing the proof of part (A).
(B) Finally, identity \eqref{condexprep} follows under the assumption of the lemma from \eqref{canonical} and \eqref{condexp} after letting $y\downarrow a$ and noticing that $(L_\psi\mathbbm{1})(y)=\psi'(y)/S'(y)$.
\end{proof}

\section{Proof of Lemma \ref{increasedvolatility}}\label{app3}
\begin{proof}
In order to prove the alleged comparative static results, we first denote by $\tilde{\psi}$ the increasing fundamental solution associated with the more volatile dynamics characterized by the volatility coefficient $\tilde{\sigma}(x)\geq\sigma(x)$ for all $x\in \mathcal{I}$. Denote by $\tau_{(y_0,y_1)}=\inf\{t\geq 0:X_t\not\in(y_0,y_1)\}$ the first exit date of the diffusion $X$ from the open subset $(y_0,y_1)\subset\mathcal{I}$, where $a<y_0<y_1 < b$. Standard application of Dynkin's theorem shows that for all $x\in (y_0,y_1)$ we have
\begin{align*}
\E_x\left[e^{-r\tau_{(y_0,y_1)}}\tilde{\psi}(X_{\tau_{(y_0,y_1)}})\right] = \tilde{\psi}(x)+\E_x\int_0^{\tau_{(y_0,y_1)}}e^{-rs}(\mathcal{G}_r\tilde{\psi})(X_s)ds\leq \tilde{\psi}(x),
\end{align*}
since
$$
(\mathcal{G}_r\tilde{\psi})(x) = \frac{1}{2}(\sigma^2(x)-\tilde{\sigma}^2(x))\tilde{\psi}''(x)\leq 0
$$
by the assumed convexity of $\tilde{\psi}$. Consequently, by utilizing standard computations we notice that for all $x\in(y_0,y_1)$ it holds
\begin{align*}
\tilde{\psi}(x) &\geq \tilde{\psi}(y_0)\E_x\left[e^{-r\eta_{y_0}};\eta_{y_0}<\eta_{y_1}\right] + \tilde{\psi}(y_1)\E_x\left[e^{-r\eta_{y_1}};\eta_{y_0}>\eta_{y_1}\right]\\
&\geq \tilde{\psi}(y_1)\E_x\left[e^{-r\eta_{y_1}};\eta_{y_0}>\eta_{y_1}\right]
=\tilde{\psi}(y_1)\frac{\psi(x)-\varphi(x)\frac{\psi(y_0)}{\varphi(y_0)}}{\psi(y_1)-\varphi(y_1)\frac{\psi(y_0)}{\varphi(y_0)}}
\end{align*}
where $\eta_z=\inf\{t\geq 0:X_t=z\}$ denotes the first hitting time of $X$ to a state $z\in \mathcal{I}$. Letting $y_0\downarrow a$ and utilizing the fact that $\lim_{x\downarrow a}\psi(x)/\varphi(x)=0$ for the considered class of boundary behaviors (cf. \cite{BorSal15}, p. 19) then shows that
$$
\frac{\tilde{\psi}(x)}{\tilde{\psi}(y_1)}\geq \frac{\psi(x)}{\psi(y_1)}
$$
for all $x\in(a,y_1)$. On the other hand, noticing that
$$
\frac{\psi(x)}{\psi(y_1)} = \exp\left(-\int_{x}^{y_1}\frac{\psi'(t)}{\psi(t)}dt\right) \leq \exp\left(-\int_{x}^{y_1}\frac{\tilde{\psi}'(t)}{\tilde{\psi}(t)}dt\right) = \frac{\tilde{\psi}(x)}{\tilde{\psi}(y_1)}
$$
for all $a<x\leq y_1 < b$ implies that $\psi'(x)/\psi(x)\geq \tilde{\psi}'(x)/\tilde{\psi}(x)$ for all $x\in\mathcal{I}$, thus completing the proof of our lemma.
\end{proof}

\section{Proof of Lemma \ref{Martin1}}\label{app4}
\begin{proof}
It is clear that under our assumptions $V_{y^\ast}(x)$ is nonnegative, continuous, and  dominates the exercise payoff $g(x)$ for all $x\in \mathcal{I}$.
Moreover, since $y^\ast\in\{x\in \mathcal{I}: V(x)=g(x)\}$ by Theorem 2.1 in \cite{ChIr11}, we find that the stopping region is nonempty.
Let $x_0\in(y^\ast,b)\setminus\mathcal{P}$ be a fixed reference point and define the ratio $h_{x_0}(x)=V_{y^\ast}(x)/V_{y^\ast}(x_0)=V_{y^\ast}(x)/g(x_0)$.
It is clear that our assumptions combined with \eqref{linearityGenerator} guarantee that
$$
\sigma_{x_0}^{h_{x_0}}((x,b])=\frac{\psi(x_0)}{Bg(x_0)}\left[\frac{g'(x+)}{S'(x)}\varphi(x)-g(x)\frac{\varphi'(x)}{S'(x)}\right]=
-\frac{\psi(x_0)}{Bg(x_0)}(L_\varphi g)(x+)
$$
is nonnegative and nonincreasing for all $x\geq x_0$. Analogously,
\begin{align*}
\sigma_{x_0}^{h_{x_0}}([a,x))&=
\frac{\varphi(x_0)}{Bg(x_0)}\left[g(x)\frac{\psi'(x)}{S'(x)}-\frac{g'(x-)}{S'(x)}\psi(x)\right]\mathbbm{1}_{(y^\ast,x_0]}(x) \\
&=
\frac{\varphi(x_0)}{Bg(x_0)}(L_\psi g)(x-)\mathbbm{1}_{(y^\ast,x_0]}(x)
\end{align*}
is nonnegative and nondecreasing for all $x\leq x_0$.  Moreover,
noticing that
$
\sigma_{x_0}^{h_{x_0}}([a,x_0))+\sigma_{x_0}^{h_{x_0}}((x_0,b])=1
$
shows,  by imposing the condition $\sigma_{x_0}^{h_{x_0}}(\{x_0\})=0$, that $\sigma_{x_0}^{h_{x_0}}$ constitutes a probability measure.
Therefore, it induces an $r$-excessive function $h_{x_0}(x)$ via its
Martin representation (cf. Proposition 3.3 in \cite{Salminen1985}). However, since increasing linear transformations of excessive functions are excessive and
$h_{x_0}(x)g(x_0)=V_{y^\ast}(x)$, we observe that $V_{y^\ast}(x)$ constitutes an $r$-excessive majorant of $g$ for $X$. Invoking now \eqref{eq Vy} shows that
$V(x)=V_{y^\ast}(x)$ and consequently, that $\tau_{y^\ast}=\inf\{t\geq 0: X_t\geq y^\ast\}$ is an optimal stopping time.
\end{proof}

\end{document}